\documentclass{article}
\usepackage[english]{babel}
\usepackage[T1]{fontenc}
\usepackage[utf8]{inputenc}

\usepackage[top=2cm,bottom=3cm,left=3cm,right=3cm,marginparwidth=1.75cm]{geometry}

\usepackage{lmodern}
\usepackage{microtype}

\usepackage{graphicx}\graphicspath{{img/}}
\usepackage{xcolor}

\usepackage{amssymb,amsmath,amsthm,mathtools,cases}
\usepackage{enumitem}
\usepackage{bm}

\usepackage{comment}

\usepackage{caption}
\usepackage{psfrag}
\usepackage{tikz}

\usepackage[active]{srcltx}   
\usepackage[colorlinks=true,urlcolor=magenta!60!black,citecolor=green!50!black,linkcolor=red!90!black,bookmarksnumbered=true,pdfauthor={Jeremie Bettinelli,Dimitri Korkotashvili}]{hyperref}

\numberwithin{equation}{section}

\theoremstyle{plain}
\newtheorem{theorem}{Theorem}

\newtheorem{proposition}[theorem]{Proposition}
\newtheorem{corollary}[theorem]{Corollary}

\theoremstyle{definition}
\newtheorem{remark}{Remark}

\newcommand{\eps}{\varepsilon}
\newcommand{\de}{\mathrel{\mathop:}\hspace*{-.6pt}=}

\newcommand{\sew}{\,{\Join}\,}

\newcommand{\ba}{\bm{a}}
\newcommand{\bc}{\mathbf{c}}
\newcommand{\m}{\mathbf{m}}
\newcommand{\bp}{\mathbf{p}}
\newcommand{\bt}{\mathbf{t}}
\newcommand{\bu}{\mathbf{u}}
\newcommand{\balpha}{\boldsymbol{\alpha}}
\newcommand{\bbeta}{\boldsymbol{\beta}}
\newcommand{\bgamma}{\boldsymbol{\gamma}}
\newcommand{\blambda}{\boldsymbol{\lambda}}
\newcommand{\bmu}{\boldsymbol{\mu}}
\newcommand{\bwp}{\boldsymbol{\wp}}

\newcommand{\N}{\mathbb{N}}

\newcommand{\overl}[1]{\mkern 3mu\overline{\mkern-3mu#1\mkern-1mu}\mkern 1mu}
\newcommand{\bipsymb}{%
	\begin{tikzpicture}
	\draw (.03,0)--(.1,0); \draw (0,0) circle (.8pt); \filldraw (.13,0) circle (.8pt);
	\end{tikzpicture}}
\newcommand{\bip}[2]{\smash{\rlap{\hspace*{#1}\raisebox{-.3em}{\bipsymb}}#2}}

\newcommand{\cM}{\mathcal{M}}
\newcommand{\bcM}{\bip{.3em}{\cM}}
\newcommand{\ccM}{\smash{\overl{\mathcal{M}}}}
\newcommand{\bccM}{\bip{.3em}{\ccM}}
\newcommand{\cS}{\mathcal{S}}

\newcommand{\cU}{\mathcal{U}}
\newcommand{\bcU}{\bip{0em}{\cU}}

\definecolor{alpha}{RGB}{227,0,0}
\definecolor{beta}{RGB}{170,85,255}
\definecolor{mu}{RGB}{0,199,0}
\definecolor{stepcol}{rgb}{0,.44,.22}

\usepackage{contour}
\contourlength{0.6pt}
\newcommand{\unli}[1]{\underline{\phantom{\smash{\textbf{#1}}}}\llap{\contour{white}{\textbf{#1}}}}

\makeatletter
\renewcommand\paragraph{\@startsection{paragraph}{4}{\z@}%
                                    {3.25ex \@plus1ex \@minus.2ex}%
                                    {-1em}%
                                    {\normalfont\normalsize\bfseries\unli}}
\makeatother

\title{Link between bipartite and general unicellular toroidal maps via slit--slide--sew bijections}
\author{J\'er\'emie Bettinelli\thanks{LIX, \'Ecole polytechnique, CNRS, Institut Polytechnique de Paris. Partially supported by ANR-21-CE48-0007 \emph{IsOMa}, ANR-23-CE48-0018 \emph{CartesEtPlus}, ANR-24-CE40-7809 \emph{LOUCCOUM}.}%
	\and %
Dimitri Korkotashvili\thanks{LIX, \'Ecole polytechnique, CNRS, Institut Polytechnique de Paris.}}

\let\oldemph\emph
\renewcommand{\emph}[1]{\textcolor{red!65!black}{\oldemph{#1}}}

\begin{document}
\maketitle

\begin{abstract}
We relate general maps to bipartite maps through a bijection of type slit-slide-sew. We provide an involution on arbitrary genus maps with even degree faces. This enables a full interpretation of the relation between general and bipartite maps, in the case of genus $1$ maps with a unique face.

The main tool is the use of rotations along well-chosen specific loops. Once a noncontractible simple loop is given, one slits along it, slides one notch, and sews back. This mildly modifies the structure of the map along the loop, changing the parity of the length of other loops crossing it. In the unicellular toroidal setting, the structure of noncontractible loops is simple enough to enable a full correspondence between general and bipartite maps.
\end{abstract}

\section{Introduction}

In the last decade, so-called \emph{slit-slide-sew bijections} have been introduced in~\cite{Bet14} in order to explain combinatorial identities on forests and on plane quadrangulations with a boundary. They were then developed to broader of similar contexts: plane bipartite and quasibipartite maps~\cite{Bet20}, quadratic formulas for plane maps derived from the KP hierarchy~\cite{Lou19}, plane tight maps~\cite{BoGuMi24}, plane constellations and quasiconstellations~\cite{BeKo26}, plane maps whose edges are oriented~\cite{BeFuLo24arX}, plane bipartite maps with prescribed degrees~\cite{Sch25arX}.

These bijections interpret combinatorial identities on several classes of (possibly decorated) maps, which may usually be derived from known counting results. They however provide alternate proofs to those formulas and, usually, enable to recover them from easy to obtain initial conditions.

Planarity plays a crucial role in all the aforementioned works. In the present paper, we consider maps of positive genus, mainly toroidal maps. We will also restrict our attention in due course to unicellular maps, that is, maps with a unique face. Although the structure of these maps is simpler, they are, from a combinatorial point of view, quite rich, and have attracted a lot of attention in the past~\cite{Cha10,Cha11,BeCh11,Ber12,ChFeFu13}.

\paragraph*{Maps.} In the present work, we define a \emph{map} as a cellular embedding of a finite graph (possibly with multiple edges and loops) into a compact orientable surface, considered up to homeomorphisms. Cellular means that the connected components of the complement of edges, called \emph{faces}, are homeomorphic to 2-dimensional open disks. Furthermore, one particular corner is distinguished and called the \emph{root} of the map. We insist on the fact that, throughout this work, all maps are considered to be rooted. See Figure~\ref{map}.

\begin{figure}[ht!]
	\centering\includegraphics[width=.5\linewidth]{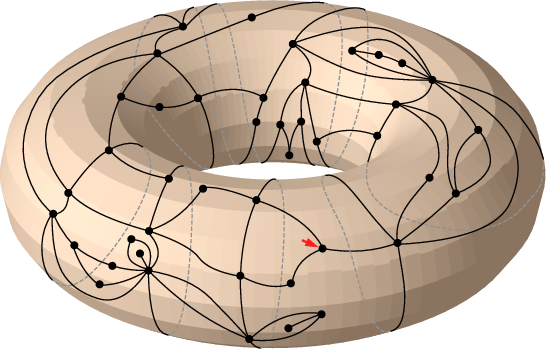}
	\caption{Map of genus~$1$ with even degree faces. The root is the corner represented by the red arrow.}
	\label{map}
\end{figure}

The vertex incident to the root of a map is called the \emph{root vertex}. The genus of the underlying surface is called the \emph{genus} of the map. Distances in a map are always understood as graph distances of the underlying graph.

We denote by~$E(\m)$ and~$V(\m)$ the edge-set and vertex-set of a map~$\m$. We call\footnote{Beware that the notion of \emph{half-edge} also has another definition in the literature.} \emph{half-edge} an edge given with one of its two possible orientations; a half-edge~$e$ is thus oriented from its \emph{tail}, denoted by~$e^-$, to its \emph{head}, denoted by~$e^+$.

\paragraph*{Jackson--Visentin identity.}
We investigate a special case of the Jackson--Visentin identity, relating, on the one side, maps of some given genus~$g$ having controlled even face degrees with, on the other side, \textbf{bipartite} maps with same face degree constraints and genuses ranging from~$0$ to~$g$, where each drop in genus is ``compensated'' by two extra marks on vertices. More precisely, let us fix for now
\begin{itemize}
	\item a genus $g\ge 0$,
	\item a number $n\ge 1$ of edges,
	\item and a positive even integers set~$\Delta\subseteq 2\N$ of admissible degrees.
\end{itemize}
We let $\prescript{\Delta\!}{n}\cM^{k}$ be the set of maps of genus~$k$ with~$n$ edges, whose faces all have degree lying in~$\Delta$. We also let~$\prescript{\Delta\!}{n}\bcM^{k}_{2i}$ be the set of maps from~$\prescript{\Delta\!}{n}\cM^{k}$ whose vertices are properly\footnote{Remember that a coloring is called \emph{proper} when no edges link two vertices of the same color.} colored black or white, in such a way that the root vertex is white, and such that~$2i$ distinct black vertices are marked. Observe that, if such a bicoloring exists, it is unique: a vertex is colored white if and only if it lies at even distance from the root vertex. Moreover, the existence of such a bicoloring is equivalent to the absence of odd-length cycles: in this case, the map is called \emph{bipartite}. Consequently, $\prescript{\Delta\!}{n}\bcM^{k}_{0}$ is the set of maps from~$\prescript{\Delta\!}{n}\cM^{k}$ that are bipartite. For instance, the map depicted in Figure~\ref{map} is not bipartite since there are odd-length cycles.

Denoting cardinalities with $\lvert\cdot\rvert$, it holds \cite[Theorem~4.1]{JaVi99} (see also~\cite{Fan14} for this precise reformulation) that
\begin{equation}\label{eqJV}
\bigl\lvert\prescript{\Delta\!}{n}\cM^{g}\bigr\rvert = \sum_{i=0}^{g} 4^{g-i}\,\bigl\lvert\prescript{\Delta\!}{n}\bcM^{g-i}_{2i}\bigr\rvert.
\end{equation}
This is actually a generalization of the particular case $\Delta=\{4\}$, which had been obtained earlier~\cite{JaVi90} and is dubbed the \emph{quadrangulation relation}.

\paragraph*{Unicellular toroidal maps.}
In the present paper, we focus on the particular case of \emph{unicellular} maps, that is, maps having a single face, and we restrict ourselves to \emph{toroidal} maps, that is, of genus~$1$. This amounts to take $n\ge 1$, fix $g=1$ and $\Delta=\{2n\}$. Equation~\eqref{eqJV} thus reads 
\begin{equation}\label{equnicel}
\bigl\lvert\,\cU^{1}\bigr\rvert = 4\,\bigl\lvert\,\bcU^{1}\bigr\rvert + \bigl\lvert\,\bcU^{0}_{2}\bigr\rvert,
\end{equation}
where
\begin{itemize}
	\item $\cU^{1}$ is the set of unicellular maps of genus~$1$ with~$n$ edges;
	\item $\bcU^{1}$ is the set of \textbf{bipartite} unicellular maps of genus~$1$ with~$n$ edges;
	\item $\bcU^{0}_{2}$ is the set of plane trees  with~$n$ edges and~$2$ distinct marked vertices at odd distance from the root vertex.
\end{itemize}

Equation~\eqref{equnicel} is the one we will bijectively interpret in the present paper through so-called \emph{slit-slide-sew} operations. The reader already familiar with this kind of techniques may note that, in contrast with previous works, all the degrees are preserved; there will be no ``degree transfer'' here.

\paragraph*{Covered maps.}
Our approach extends to general maps with prescribed face degrees (as in~\cite{Bet20}) that carry a distinguished covering unicellular map.

In this context, the faces of the maps are labeled~$f_1$, \dots, $f_r$. The \emph{type} of a map is the tuple $(\deg(f_1),\dots,\deg(f_r))$ of the degrees of its faces. In particular, the unicellular maps considered above are the maps of type $(2n)$, that is, with a unique face, $f_1$, of degree~$2n$.

A \emph{covered map} is a pair $(\m,\bu)$ of maps with same genus, same vertex-set $V(\bu)=V(\m)$, same root, and such that~$\bu$ is unicellular and made of edges from~$\m$, that is, $E(\bu)\subseteq E(\m)$. By \oldemph{same root}, we mean that the distinguished corner of~$\m$ is included in that of~$\bu$. The first coordinate, $\m$, will be called the \emph{ambient map}, while the second coordinate, $\bu$, will be called the \emph{covering map}.

In the planar case (genus~$0$), these are so-called \emph{tree-rooted maps}. By duality, covered maps are actually, in any genus, closely related to tree-rooted maps, which have been the focus of several studies \cite{Mul67,WaLe72I,WaLe72II,WaLe72III,Ber07,BeCh11HZ,FrSe20,AlFuSa24,FrSe25}.

For a tuple $\ba=(a_1,\ldots,a_r)$ of positive even integers, we denote by~$\ccM^{1}(\ba)$ the set of covered maps of genus~$1$ whose ambient map is of type~$\ba$. We also let $\bccM^{1}(\ba)\subseteq \ccM^{1}(\ba)$ be the subset of covered maps whose ambient map is bipartite, and $\bccM_{2}^{0}(\ba)$ the set of covered maps of genus~$0$ whose ambient map is of type~$\ba$, carrying two marked vertices at odd distance from the root vertex. Note that maps of type~$\ba$ in genus~$0$ are necessarily bipartite. Furthermore, in a covered map whose ambient map is bipartite, the parity of distances in the covering map or in the ambient map are the same.

Our bijections enable to obtain the following relation.

\begin{proposition}\label{propcm}
It holds that
\begin{equation}\label{eqcm}
\bigl\lvert\ccM^{1}(\ba)\bigr\rvert =  4\,\bigl\lvert\bccM^{1}(\ba)\bigr\rvert + \bigl\lvert\bccM_{2}^{0}(\ba)\bigr\rvert.
\end{equation}
\end{proposition}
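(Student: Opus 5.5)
We plan to reduce the statement to the structure of the covering map and to use slit--slide--sew operations along noncontractible loops of the covering map. Let $(\m,\bu)$ be a covered map of genus~$1$. Since the ambient map has even face degrees, every contractible cycle of~$\m$ has even length. Hence the parity of the length of a cycle of~$\m$ depends only on its homology class in $H_1(\mathbb{T};\mathbb{Z}/2)\cong(\mathbb{Z}/2)^2$. Moreover, every cycle of~$\m$ is, modulo~$2$, homologous to a combination of fundamental cycles of~$\bu$. We therefore obtain a parity homomorphism $\phi:(\mathbb{Z}/2)^2\to\mathbb{Z}/2$, and $\m$ is bipartite exactly when $\phi=0$. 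There are four possible values of~$\phi$, one of them zero. The target identity then reads as follows: the covered maps with $\phi\neq0$ together with the nonbipartite part should be in bijection with three copies of $\bccM^{1}(\ba)$ plus $\bccM_{2}^{0}(\ba)$. Equivalently, we aim at $\lvert\ccM^{1}(\ba)\rvert-\lvert\bccM^{1}(\ba)\rvert=3\lvert\bccM^{1}(\ba)\rvert+\lvert\bccM_2^0(\ba)\rvert$.

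The main tool is the rotation along a noncontractible simple loop~$\gamma$ formed by edges of the covering map~$\bu$. A unicellular toroidal map has a core consisting of two vertices of degree~$3$ or one vertex of degree~$4$, joined by three (or two) branches carrying trees. The union of two branches forms a simple noncontractible loop. We slit the torus along such a loop~$\gamma$, slide one side by one notch (one corner position), and sew back. This preserves all face degrees and keeps a unicellular covering map. Its effect is to shift by one the length of every cycle crossing~$\gamma$ an odd number of times, so it changes~$\phi$ on the classes with odd intersection with~$[\gamma]$.

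Choosing loops in the two independent classes of the core, we move $\phi$ among the four values. This should provide, for each nonzero value~$\phi$, a bijection between covered maps with that parity and bipartite covered maps, after canonically choosing the loop. The failure of this procedure should exactly account for the term $\bccM_2^0(\ba)$: sliding is not invertible, or not well defined, when a branch of the core is degenerate, for instance of length~$0$ in the one-vertex-core case or so short that the slide collapses it. In those situations we instead cut the map along a loop to obtain a genus-$0$ covered map, namely a tree-rooted plane map. The two endpoints of the cut become two marked vertices, and the parity bookkeeping of the slide forces them to lie at odd distance from the root. The genus-$1$ Jackson--Visentin formula~\eqref{equnicel} is the special case of a single face, which we can use as a consistency check.

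The main obstacle is to make the loop choice canonical, so that each map is produced exactly once. We must fix which branch pair to slide along and in which direction, in a way determined by the root corner and recoverable after sliding. We also have to handle precisely the degenerate configurations that produce the plane term with two marks. We would first try the following rule: always slide along the loop avoiding the branch that contains the root, and define the inverse by reading off the modified corner structure at the core vertices. We then verify the inverse case by case on the core types (two trivalent vertices versus one tetravalent vertex).
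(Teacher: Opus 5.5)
You correctly set up the framework, and it matches the paper: parity of closed walks depends only on the class in $H_1(\mathbb{T};\mathbb{Z}/2)$, bipartiteness of $\m$ is decided on $\bu$ (Proposition~\ref{propbipcov}), and rotations along loops of $\bu$ extend to covered maps (Corollary~\ref{corcov}). The gap is the part that actually proves the identity: a canonical sliding loop that can be recovered after rotating. You leave it open, and the rule you suggest does not work. A rotation never changes the length of its sliding loop. So from a non-bipartite map with generic scheme, the loop is forced by parity: it must be the unique even one among $\blambda_\bu$, $\bmu_\bu\bullet\balpha_\bu$, $\bmu_\bu\bullet\bbeta_\bu$. ``The loop avoiding the root's branch'' is odd in general, and then the image is still non-bipartite. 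Read from the bipartite side, one loop per map (even with both signs) gives at most two images, whereas the factor $3$ needs three. The paper rotates each bipartite map along $\blambda_\bu$, along $\bmu_\bu\bullet\balpha_\bu$, and along $\bmu_\bu\bullet\bbeta_\bu$; when the scheme is degenerate and this last loop does not exist, it uses a composition of two rotations instead. Invertibility rests on two things. First, the rightmost noncontractible simple loop $\blambda_\m$ is returned as the rightmost loop of the image (Theorem~\ref{thmzero}). Second, the root parameters $b_\bu,e_\bu$ dictate the signs and the case analysis of Theorem~\ref{thmspec}, in which the system $\{\bmu,\balpha,\bbeta\}$ is recovered up to permutation. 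Also, ``for each nonzero value of $\phi$'' has no meaning until a canonical basis of $H_1$ is fixed. With the paper's basis, the three non-bipartite classes are not separately equinumerous with the bipartite one: for $n=3$, i.e.\ $\ba=(6)$, one has $\lvert\cU^1_{(1,0)}\rvert=2$ while $\lvert\bcU^1\rvert=1$. Only $\cU^1_{(0,1)}$ is matched alone. $\cU^1_{(1,0)}\sqcup\cU^1_{(1,1)}$ is split into seven pieces, matched with $\bcU^0_2$ and with the blocks of two different partitions of $\bcU^1$.

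You also misidentify where the term $\bccM_2^0(\ba)$ comes from. Sliding is always well defined and is an involution (Proposition~\ref{propinvol}), and branches never have length~$0$. A slide may merge the two core vertices or split the four-valent one (generic with a length-$1$ branch $\leftrightarrow$ degenerate), but this is harmless and handled by ordinary rotations, e.g.\ in cases~\eqref{specb} and~\eqref{specc}. The real obstruction is topological. In a degenerate scheme the union of the two branches is a figure-eight, not a simple loop, so the only simple noncontractible loops of $\bu$ are $\blambda_\bu$ and $\bmu_\bu$. When both are odd, the even class $[\blambda_\bu]+[\bmu_\bu]$ has no simple representative in the covering map, so no single rotation reaches a bipartite map. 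When $e_\bu=0$ the paper uses two successive rotations (case~\eqref{specf}). When $e_\bu=1$ it uses a different bijection. It slits along \emph{both} loops to get a disk with boundary $\bmu^\ell\bullet\blambda^\ell\bullet\bar\bmu^r\bullet\bar\blambda^r$, then sews this boundary onto itself in a planar way. The result is a path of even length $[\blambda_\bu]+[\bmu_\bu]$ whose two extremities are marked. The folding is shifted by one according to the parity of the root path $\bwp$, so that both marks lie at odd distance from the root. Cutting along a single loop, as you suggest, only yields an annulus, and a loop has no endpoints to mark.
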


We will prove this in Section~\ref{seccov} while performing our slit-slide-sew operation on the covering unicellular map. When the ambient map is unicellular, which amounts to take $\ba=(2n)$, we immediately recover~\eqref{equnicel} since there is a unique choice for the covering map in this case: the ambient map itself.

\paragraph*{Future perspectives.}
The present work is a first step toward bijectively recovering~\eqref{eqJV}, as well as a prescribed face degree analog. Natural intermediate results under study include general genus~$1$ maps or unicellular maps in genus~$2$ or more.

\section{Main involution on general maps with a distinguished loop}

At this stage, we work with general maps of any positive genus~$g$. We denote by~$\cM^{g}\de\bigcup_n\prescript{2\N\!}{n}\cM^{g}$ the set of maps of genus~$g$ with even face degrees. For a tuple $\ba=(a_1,\ldots,a_r)$ of positive even integers, we denote by~$\cM^{g}(\ba)$ the set of maps of genus~$g$ and type~$\ba$, that is, whose faces are labeled~$f_1$, \dots, $f_r$ and such that $(\deg(f_1),\dots,\deg(f_r))=\ba$.

\subsection{Preliminary definitions}

\paragraph*{Paths and loops.}
A \emph{path} is a finite sequence $\bwp=(e_1,e_2,\dots,e_k)$ of half-edges such that $e_i^+=e_{i+1}^-$, for $1\le i \le k-1$. Its \emph{length} is the integer~$k$, which we denote by $[\bwp]\de k$. The \emph{reverse} of~$\bwp$ is the path $\bar\bwp=(\bar e_k,\dots,\bar e_2,\bar e_1)$, where~$\bar e_i$ is the reverse of~$e_i$, that is, the half-edge corresponding to the same edge, with reverse orientation.

We say that~$\bwp$ \emph{links} the vertices~$e_1^-$ to~$e_k^+$. A path is called \emph{simple} if the vertices it visits are all distinct. If the paths $\bwp_1=(e_1,e_2,\dots,e_k)$ and $\bwp_2=(e_{k+1},e_{k+2},\dots,e_m)$ satisfy $e_k^{+}=e_{k+1}^{-}$, we denote their \emph{concatenation} by $\bwp_1\bullet \bwp_2\de (e_1,e_2,\dots,e_m)$.

A \emph{loop} is a path $\bgamma=(e_1,e_2,\dots,e_k)$ linking some vertex to itself, that is, such that $e_1^{-} = e_k^{+}$. The loop $\bgamma=(e_1,e_2,\dots,e_k)$ is \emph{simple} if the path $(e_1,e_2,\dots,e_{k-1})$ is simple; in other words, all the vertices it visits are visited exactly once, except the origin and end, which is visited exactly twice.

Through the embedding of the graph in the definition of a map, any half-edge~$e$ is given with a continuous mapping $s\in [0,1]\mapsto e(s)$ with $e(0)=e^-$ and $e(1)=e^+$. A loop $\bgamma=(e_1,e_2,\dots,e_k)$ is \emph{contractible} (resp.\ \emph{noncontractible}), if the curve $t\in(0,k]\mapsto e_{\lceil t \rceil}(t+1-\lceil t \rceil)$ associated with it is homotopic (resp.\ nonhomotopic) to a single point.

\paragraph*{Slit-slide-sew operation.}
The techniques of so-called \emph{slit-slide-sew} have been used -- exclusively in the planar setting -- in order to mildly transform maps \cite{Bet14,Lou19,Bet20,BoGuMi24,Sch25arX}, or extensions thereof \cite{BeKo26,BeFuLo24arX}, by sliding along a well-chosen path. 

The general idea is as follows. Let us assume that we are given a simple path $\bwp=(e_1,e_2,\dots,e_k)$ in some map~$\m$. We may slit the surface into which the map is embedded along this path. In the map~$\m$, this doubles the path~$\bwp$, making up two copies: one to its left, $\bwp^\ell=(e^\ell_1,e^\ell_2,\dots,e^\ell_k)$, and one to its right, $\bwp^r=(e^r_1,e^r_2,\dots,e^r_k)$. The way each extremity of the path is treated has to be made precise: we may keep~$\bwp^\ell$ and~$\bwp^r$ connected at the extremal vertex, or disconnect them by sliding inside some incident corner.

We then sew back~$\bwp^\ell$ onto~$\bwp^r$ but only after sliding by one unit, in the sense that we match~$e^\ell_{i}$ with~$e^r_{i\pm 1}$. For further reference, we denote by $e^\ell_{i}\sew e^r_{i\pm 1}$ the edge resulting of this gluing. Again, what to do exactly at the extremities of the paths has to be set. This toy procedure possibly affects only up to two faces, those into which the extremities of the paths~$\bwp^\ell$ and~$\bwp^r$ have been possibly disconnected. The rest of the embedding, as well as the genus, remains unchanged.

Conceptually, this operation defines a function that maps the original map to a new map, whose parameters have been mildly changed. To promote this construction to a \oldemph{bijection}, the construction has to be revertible: from the transformed map, one should be able to recover the image of the sliding path~$\bwp$ in order to slide backwards. This necessitates the introduction of a canonical notion of sliding path, which can be tracked down in the resulting map.

\subsection{Involution}\label{secinvol}

We present here the involution at the heart of our processes. It works on general maps, possibly with prescribed face degrees, but who carry a distinguished noncontractible simple loop. It can be thought of as an \href{https://www.francocube.com/notation}{R or R' move} on a genus~$g$ equivalent of a \oldemph{Rubik's cube}. Basically, we will rotate, in one direction or another, by one notch along the distinguished loop. See Figure~\ref{involfig}.

Let $\m\in\cM^{g}$, let $\bgamma=(\gamma_1,\dots,\gamma_k)$ be a noncontractible simple loop in~$\m$, and let $\eps\in\{+,-\}$.

\begin{description}[style=nextline]
\item[\textcolor{stepcol}{1. Slit}]\hypertarget{slitstep}
	We cut the surface along~$\bgamma$ and obtain a surface with two boundary components, one carrying the left copy $\bgamma^\ell=(\gamma^\ell_1,\dots,\gamma^\ell_k)$ of~$\bgamma$, and one carrying its right copy $\bgamma^r=(\gamma^r_1,\dots,\gamma^r_k)$. 

\item[\textcolor{stepcol}{2. Slide and Sew}]\hypertarget{sewstep}
	We sew back~$\bgamma^\ell$ onto~$\bgamma^r$ after sliding one notch: for $1\le i \le k$, we identify~$\gamma^r_i$ with~$\gamma^\ell_{i\pm 1}$, where the sign~$\pm$ is given by~$\eps$, and $i\pm 1$ is understood modulo~$k$.

\item[\textcolor{stepcol}{3. Output}]
	Let~$\tilde\m$ be the resulting map, let $\tilde\bgamma=(\gamma^r_1\sew\gamma^\ell_{1\pm 1},\dots,\gamma^r_k\sew\gamma^\ell_{k\pm 1})$ be the image of~$\bgamma$ in~$\tilde\m$, and let $\tilde\eps\de -$ if $\eps=+$ or $\tilde\eps\de +$ if $\eps=-$. The output of the construction is $\Phi(\m,\bgamma,\eps)\de (\tilde\m,\tilde\bgamma,\tilde\eps)$.
\end{description}

\begin{figure}[ht!]
		\psfrag{l}[B][B][.8]{\textcolor{beta}{$\bgamma$}}
		\psfrag{t}[B][B][.8]{\textcolor{beta}{$\tilde\bgamma$}}
		\psfrag{g}[B][B][.8]{\textcolor{beta}{$\bgamma^\ell$}}
		\psfrag{d}[B][B][.8]{\textcolor{beta}{$\bgamma^r$}}
		\psfrag{1}[B][B][.8]{\textit{slit}}
		\psfrag{2}[B][B][.8][40]{\textit{slide,} $\eps=+$}
		\psfrag{3}[B][B][.8][40]{\textit{slide,} $\eps=-$}
		\psfrag{4}[B][B][.8]{\textit{sew}}
	\centering\includegraphics[width=.95\linewidth]{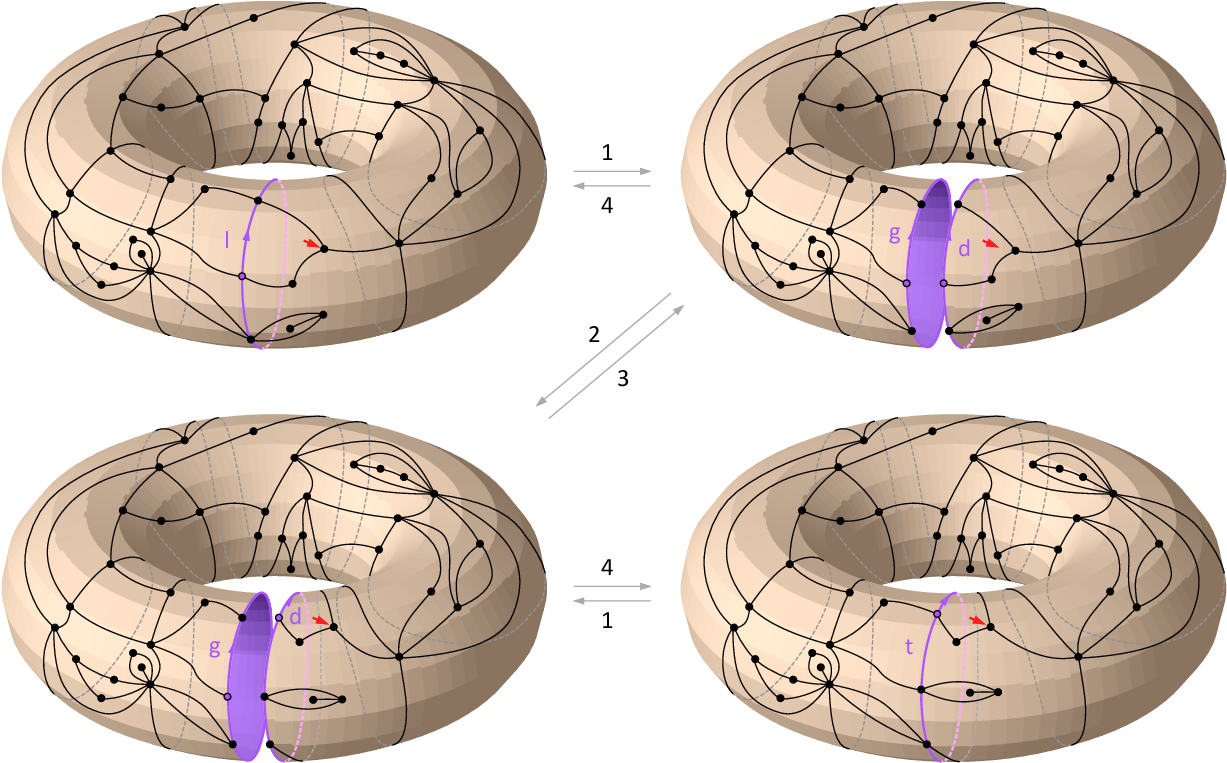}
	\caption{Main involution. The origin of the loops~$\blambda$ and~$\tilde\blambda$ are indicated with purple vertices. When $\eps=+$, we ``screw'' the map one notch along the distinguished loop. We ``unscrew'' one notch along the loop if $\eps=-$.}
	\label{involfig}
\end{figure}

If, furthermore, the faces of~$\m$ are labeled, those of~$\tilde\m$ naturally inherit the labeling from that of~$\m$. In what follows, applying~$\Phi$ to~$\m$ and~$\bgamma$ might be referred to as \emph{rotating}~$\m$ along~$\bgamma$, and the loop~$\bgamma$ will be called the \emph{sliding loop}.

Observe that the action of the $k$-th iterate~$\Phi^k$ is trivial on loops of length~$k$ since its merely consists in a \oldemph{Dehn twist} for these maps: $\Phi^{[\bgamma]}(\m,\bgamma,\eps)=(\m,\bgamma,\pm)$. In particular, $\Phi$ bears no effect on maps whose distinguished loop has length~$1$.

\begin{proposition}\label{propinvol}
The mapping~$\Phi$ is an involution on the set of triples consisting of a map in~$\cM^g$, a distinguished noncontractible simple loop, and a sign in~$\{+,-\}$.

It preserves the number of edges of the map, as well as its type when the faces are labeled.
\end{proposition}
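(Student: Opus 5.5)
We prove the statement in three steps: $\Phi$ is well defined on the stated set, $\Phi$ preserves the number of edges and the type, and $\Phi\circ\Phi$ is the identity.

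\textbf{Well-definedness.} Slitting the surface along the simple loop~$\bgamma$ gives a compact orientable surface with two boundary circles. Each circle carries the $k$ vertices and $k$ edges of one copy of~$\bgamma$, and both copies are traversed in the same direction. Regluing the two circles with a cyclic shift by one notch is an orientation-compatible homeomorphism between them. It matches vertices to vertices and edges to edges, so the result is a closed orientable surface carrying an embedded graph. This surface is homeomorphic to the original one, since the regluing map is isotopic to the original gluing through rotation of the circle. In particular the genus is still~$g$.

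\textbf{Faces and edges.} Every face of~$\m$ is an open disk that does not meet~$\bgamma$, so it survives unchanged, and cellularity is preserved. A face incident to~$\bgamma$ is bounded by edge-sides lying on~$\bgamma^\ell$ or on~$\bgamma^r$. After regluing, its boundary walk uses the same number of edge-sides. The only difference is that at a vertex of~$\tilde\bgamma$ the walk turns to a different neighbour. Hence each face keeps its degree, and the labeling is inherited, so the type is preserved. The edge set is $(E(\m)\setminus\bgamma)\cup\{\gamma^r_i\sew\gamma^\ell_{i\pm1}\}$, which has the same cardinality as $E(\m)$.

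\textbf{The output lies in the same set.} The image~$\tilde\bgamma$ visits $k$ distinct vertices, namely the glued pairs, so it is a simple loop. It is the image of the boundary circle under the homeomorphism above, so it is noncontractible. One should also check that no unintended identifications occur. This holds because a simple loop has no repeated vertices, so the two boundary circles of the slit surface are disjoint, and the only identifications made are the prescribed pairings.

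\textbf{Involutivity.} Slitting~$\tilde\m$ along~$\tilde\bgamma$ recovers exactly the slit surface of~$\m$. Its left and right copies are $\bgamma^\ell$ and $\bgamma^r$, indexed so that $\tilde\bgamma_i$ corresponds to the pair $(\gamma^r_i,\gamma^\ell_{i\pm1})$. Applying the slide with the opposite sign $\tilde\eps$ pairs~$\gamma^r_i$ with~$\gamma^\ell_{i}$, which is the original gluing. Its image loop is $(\gamma^r_1\sew\gamma^\ell_1,\dots)=\bgamma$, and the sign returns to~$\eps$. Hence $\Phi\circ\Phi=\mathrm{id}$.

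\textbf{Main obstacle.} The delicate point is the bookkeeping of left and right copies under this re-indexing. One must check that the left side of~$\tilde\bgamma$ in~$\tilde\m$ is indeed the former~$\bgamma^\ell$ boundary, with the same orientation and starting index. To do this, fix the orientation convention of the loop and track the origin vertex of~$\tilde\bgamma$, which is the glued vertex at the start of $\gamma^r_1\sew\gamma^\ell_{1\pm1}$, as in Figure~\ref{involfig}.
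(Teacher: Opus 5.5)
Your proof is correct and follows the paper's argument. It checks that the reglued map stays in $\cM^g$ with the same number of edges and the same face degrees. It notes that $\tilde\bgamma$ remains a noncontractible simple loop because the surface and the embedding of the loop are preserved. It then gets involutivity from the re-indexing $\tilde\gamma^r_i=\gamma^r_i$, $\tilde\gamma^\ell_i=\gamma^\ell_{i\pm1}$, under which the opposite slide restores the original pairing $\gamma^r_i\sew\gamma^\ell_i$. The only difference is cosmetic: you obtain the genus from the isotopy of the regluing map rather than from the paper's count of gaining and then losing $k$ edges and $2$ faces, which has the small advantage of not implicitly assuming that $\bgamma$ is non-separating.
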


\begin{proof}
During the \hyperlink{slitstep}{\textbf{\textcolor{stepcol}{Slit}}} step, the map~$\m$ gains~$k$ edges, $2$ faces and sees its genus decrease by~$1$. In the \hyperlink{sewstep}{\textbf{\textcolor{stepcol}{Slide and Sew}}} step, it then loses~$k$ edges, $2$ faces and sees its genus increase by~$1$. So the resulting map~$\tilde\m\in\cM^g$ and $|E(\tilde\m)|=|E(\m)|$. Furthermore, the faces are not affected by the process so the type is preserved when the faces are labeled.

The underlying surface and the embedding of~$\bgamma$ are also preserved by~$\Phi$. So~$\tilde\bgamma$ is a noncontractible simple loop of~$\tilde\m$. Let us assume, without loss of generality, that $\eps=+$. In the application of~$\Phi$ to $(\tilde\m,\tilde\bgamma,\tilde\eps)$, the loop~$\tilde\bgamma$ is slit into its left copy $\tilde\bgamma^\ell=(\tilde\gamma^\ell_1,\dots,\tilde\gamma^\ell_k)$ and its right copy $\tilde\bgamma^r=(\tilde\gamma^r_1,\dots,\tilde\gamma^r_k)$, where
$\tilde\gamma^\ell_i=\gamma^\ell_{i+ 1}$ and $\tilde\gamma^r_i=\gamma^r_i$, and then sewn back by identifying $\tilde\gamma^r_i$ with~$\tilde\gamma^\ell_{i- 1}$, that is, $\gamma^r_i$ with~$\gamma^\ell_i$. We thus recover~$\m$, and the distinguished loop is then $(\gamma^r_1\sew\gamma^\ell_{1},\dots,\gamma^r_k\sew\gamma^\ell_{k})=\bgamma$. Finally, the third coordinate is the opposite of~$\tilde\eps$, that is, $\eps$. As a result, $\Phi(\tilde\m,\tilde\bgamma,\tilde\eps)=(\m,\bgamma,\eps)$, as desired.
\end{proof}

\paragraph*{Effects on paths crossing the loop.}\hypertarget{parity}
Rotating along a loop $\bgamma=(\gamma_1,\dots,\gamma_k)$ does not affect the sliding loop itself but creates an ``offset'' for all the paths that traverse it, as is illustrated on Figure~\ref{cross}. More precisely, let us consider a path on the original map~$\m$ of the form $\bp\bullet\bwp\bullet\bp'$, with $[\bp]\ge 1$, $[\bwp]\ge 0$, $[\bp']\ge 1$, and such that~$\bwp$ is empty or part of~$\bgamma$, neither~$\bp$ nor~$\bp'$ uses edges used by~$\bgamma$, and~$\bp$ arrives to the right of~$\bgamma$.

Then, through the rotation along~$\bgamma$, the path $\bp\bullet\bwp\bullet\bp'$ is naturally transformed into $\bp\bullet\tilde\bwp\bullet\bp'$, where~$\tilde\bwp$ is as follows.
\begin{itemize}
	\item If~$\bp'$ leaves from the \oldemph{right} of~$\bgamma$, then $\tilde\bwp\de\varnothing$ if$\bwp=\varnothing$, or $\tilde\bwp\de(\tilde\gamma_r,\dots,\tilde\gamma_s)$ if $\bwp=(\gamma_r,\dots,\gamma_s)$.
	\item If, on the contrary, $\bp'$ leaves from the \oldemph{left} of~$\bgamma$, then
	\begin{itemize}
		\item if $\bwp=\varnothing$, and~$r$ is the index such that~$\gamma_r^-$ is the end of~$\bp$ (and the origin of~$\bp'$),
		\begin{itemize}
			\item $\tilde\bwp\de\bar{{\tilde\gamma}}_{r-1}$ if $\eps=+$,
			\item $\tilde\bwp\de\tilde\gamma_{r}$ if $\eps=-$;
		\end{itemize}
		\item if $\bwp=(\gamma_r,\dots,\gamma_s)$,
		\begin{itemize}
			\item $\tilde\bwp\de(\tilde\gamma_r,\dots,\tilde\gamma_{s-1})$ if $\eps=+$,
			\item $\tilde\bwp\de(\tilde\gamma_r,\dots,\tilde\gamma_{s+1})$ if $\eps=-$.
		\end{itemize}
	\end{itemize}
\end{itemize}

\begin{figure}[ht!]
		\psfrag{v}[Br][Br]{$\bwp=\varnothing$}
		\psfrag{w}[Br][Br]{$\bwp\ne\varnothing$}
		\psfrag{g}[B][B][.8]{\textcolor{beta}{$\bgamma$}}
		\psfrag{h}[B][B][.8]{\textcolor{beta}{$\tilde\bgamma$}}
		\psfrag{b}[B][B][.8]{\textcolor{alpha}{$\bwp$}}
		\psfrag{c}[B][B][.8]{\textcolor{alpha}{$\tilde\bwp$}}
		\psfrag{e}[B][B]{$\eps=+$}
		\psfrag{f}[B][B]{$\eps=-$}
		\psfrag{p}[B][B][.8]{$\bp$}
		\psfrag{q}[B][B][.8]{$\bp'$}
	\centering\includegraphics[width=.95\linewidth]{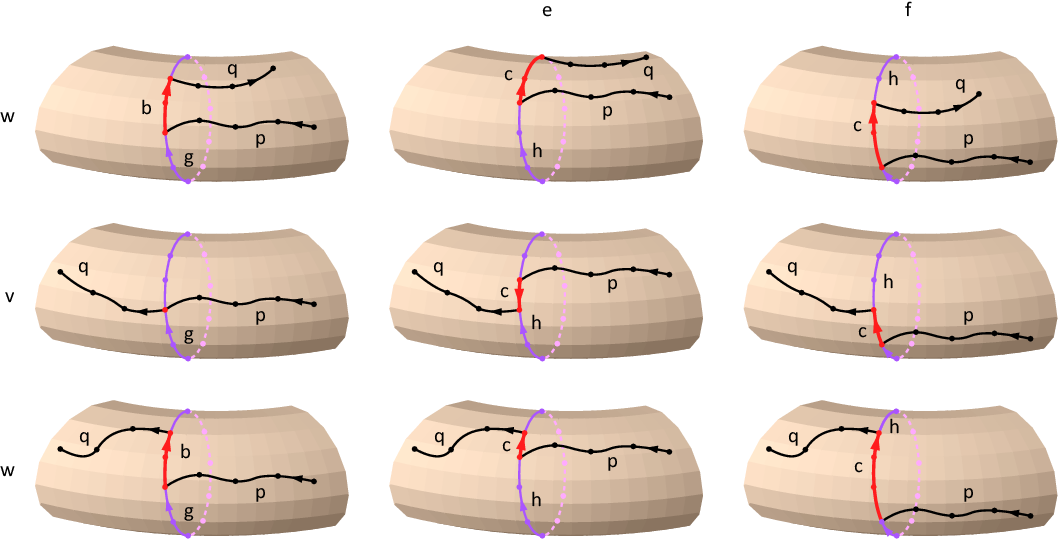}
	\caption{Change of parity in the length of a path crossing the sliding loop.}
	\label{cross}
\end{figure}

We may similarly canonically define the image of paths that cross~$\bgamma$ from left to right or use half-edges of~$\bar\bgamma$. In any case, we see that touching~$\bgamma$ without crossing it does not affect the length of the considered path, while each crossing changes the parity of its length. In particular, a loop homotopic to~$\bgamma$ crosses it an even number of times so the parity of its length is unchanged. On the contrary, the parity of the length of a loop that is not homotopic to~$\bgamma$ may change.

\begin{remark}
Beware that the way we defined here the image of a path crossing the sliding loop is not the only possibility. We could indeed use other parts of the sliding loop for example. Furthermore, the image of a simple path may no longer be simple. If we rotate along a length~$1$ loop for instance, although the involution is the identity in this case, the process described above adds the loop to the path (thus changing the parity of its length) and makes it not simple.
\end{remark}

\subsection{Rightmost loop}\label{secrl}

In order to interpret the desired equations~\eqref{equnicel} and~\eqref{eqcm}, we will apply the involution~$\Phi$ to the desired maps, while canonically defining the distinguished loop along which to rotate. This has to be done in such a way that the image of the chosen loop is recoverable in the resulting map, in order to invert the process.

In this subsection, we will define a first loop and see that everything goes smoothly when rotating along it, even for general maps of any positive genus~$g$. In fact, this first loop is going to be preserved through the involution.

\paragraph*{Rightmost noncontractible simple loop.}\hypertarget{rnslconstr}
Let $\m\in\cM^g$ be a map. We consider the set~$\cS$ of all paths of the form~$\bp\bullet\bgamma$, where~$\bp$ is a simple path starting from the root vertex and~$\bgamma$ is a noncontractible simple loop that intersects~$\bp$ only at its origin. We take the \oldemph{rightmost path}~$\bwp$ of~$\cS$, which might be constructed iteratively as follows.
\begin{itemize}
	\item Starting from the root and turning counterclockwise around the root vertex, we select the first half-edge~$\wp_1$ that belongs to an element of~$\cS$.
	\item Assuming the beginning of the path $(\wp_1,\dots,\wp_i)$ has been constructed:
	\begin{itemize}
		\item if $(\wp_1,\dots,\wp_i) \in \cS$, we stop and set the rightmost path as $\bwp\de(\wp_1,\dots,\wp_i)$;
		\item otherwise, we let $\wp_{i+1}$ be the first half-edge after~$\wp_{i}$, turning counterclockwise around the vertex~$\wp_{i}^+$, that is such that at least one element of~$\cS$ has prefix $(\wp_1,\dots,\wp_{i+1})$.
	\end{itemize}
\end{itemize}
Finally, the \emph{rightmost noncontractible simple loop} is defined as the noncontractible simple loop~$\bgamma$ of the decomposition $\bwp=\bp\bullet\bgamma$ as an element of~$\cS$. We denote this loop by~$\blambda_\m$. See Figure~\ref{rnsl}.

\begin{figure}[ht!]
		\psfrag{l}[B][B][.8]{\textcolor{beta}{$\blambda_\m$}}
		\psfrag{p}[B][B][.8]{$\bp$}
	\centering\includegraphics[width=.5\linewidth]{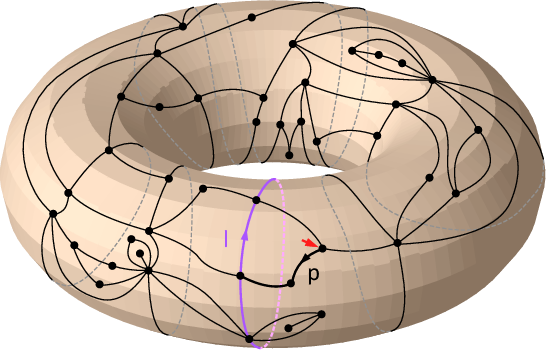}
	\caption{Rightmost noncontractible simple loop of the map from Figure~\ref{map}. The loop~$\blambda_\m$ is the loop part (in purple) of the rightmost path (in thicker lines) from the root consisting of a simple path concatenated with a noncontractible simple loop.}
	\label{rnsl}
\end{figure}

\paragraph*{Rotating along $\blambda_\m$.}
Let us see what happens when we apply the involution~$\Phi$ to the rightmost loop~$\blambda_\m$.

\begin{theorem}\label{thmzero}
Let $g\ge 1$ be fixed. Let us define $\Psi^{\eps}\colon\cM^g\to\cM^g$ by letting $\Psi^{\eps}(\m)$ be the first coordinate of $\Phi(\m,\blambda_\m,\eps)$. Then~$\Psi^{+}$ and~$\Psi^{-}$ are inverse bijections. Furthermore,
\begin{enumerate}[label=(\textit{\roman*})]
	\item they preserve the number of edges of the map;
	\item they preserve the type of the map when the faces are labeled.
\end{enumerate}
\end{theorem}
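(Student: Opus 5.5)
By Proposition~\ref{propinvol}, $\Phi(\m,\blambda_\m,\eps)=(\tilde\m,\tilde\blambda,\tilde\eps)$ with $\tilde\m\in\cM^g$. It has the same number of edges and the same type as~$\m$, so properties~(\textit{i}) and~(\textit{ii}) hold at once. The whole content of the theorem is therefore the claim
\begin{equation*}
\blambda_{\tilde\m}=\tilde\blambda ,
\end{equation*}
namely that the rightmost noncontractible simple loop of the rotated map is the image of the sliding loop. Granting this, $\Psi^{-\eps}(\tilde\m)$ is the first coordinate of $\Phi(\tilde\m,\tilde\blambda,\tilde\eps)$, which equals~$\m$ by the involution property. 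Hence $\Psi^{-}\circ\Psi^{+}$ and $\Psi^{+}\circ\Psi^{-}$ are both the identity.

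To prove the claim, I would first record what the rotation leaves unchanged. The slit and sew take place only along~$\blambda_\m$, so every edge and vertex off the loop keeps its cyclic order of incidences. The embedding of the curve is unchanged, so the set of homotopy classes is unchanged too. The rightmost path $\bwp=\bp\bullet\blambda_\m$ meets the loop only at its last vertex $v=\gamma_1^-$, and $\bp$ uses no edge of the loop. The path $\bp$ therefore survives verbatim in~$\tilde\m$ and arrives at the image vertex of~$v$, from the same side. Moreover, $\tilde\blambda$ is again a noncontractible simple loop through that vertex, so $\bp\bullet\tilde\blambda$ lies in the set~$\tilde\cS$ built from~$\tilde\m$. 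One should also check that the root corner is not split: since $\bp$ is simple and starts at the root, either the root vertex is off the loop, or $\bp$ is empty and the root corner lies on a definite side of~$\blambda_\m$, so it is carried unchanged.

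Next comes the maximality argument, which I expect to be the main obstacle. Suppose some element of~$\tilde\cS$ departs from $\bp\bullet\tilde\blambda$ at some step by turning strictly more to the right. I would transport it back to~$\m$ by applying~$\Phi$ once more, using the rules for images of paths crossing the loop (with the sign~$\tilde\eps$). The transported object must be a simple path concatenated with a noncontractible simple loop, to the right of~$\bwp$, which contradicts the choice of~$\bwp$.

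The delicate points, all local near $\blambda_\m$, are these:
\begin{itemize}
\item a competing path may touch or cross~$\tilde\blambda$, and its image may lose simplicity, as in the Remark;
\item the departure may happen while turning around a vertex of the loop itself, where the corners to the left of the loop have been shifted one notch.
\end{itemize}
To handle them, I would use the rightmost property of $\blambda_\m$ in~$\m$. Any competing path branching to the right of~$\bwp$ at a vertex of~$\bp$ lives in a region where nothing has changed, so the comparison is identical in~$\m$ and~$\tilde\m$. For branching along the loop, I would argue that the region to the right of~$\blambda_\m$, which contains the corners examined first when turning counterclockwise, is glued onto~$\bgamma^r$, and that side is untouched by the slide. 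Only left-side corners are shifted. Hence the first admissible half-edge found at each vertex of the loop is again the next edge of~$\tilde\blambda$. The main work is making this right/left bookkeeping precise, including checking that a competitor which closes up earlier gives a noncontractible simple loop in~$\m$ as well.
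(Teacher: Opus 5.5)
Your reduction is the paper's. Properties (\textit{i}), (\textit{ii}) and the inverse property all follow from Proposition~\ref{propinvol} once $\blambda_{\tilde\m}=\tilde\blambda$ is known, and $\bp\bullet\tilde\blambda\in\tilde\cS$ holds. For the latter, say explicitly that $\bp$ reaches $\blambda_\m$ from the \emph{right}, which is forced by the counterclockwise choice at $v$. Only the right copy keeps its labelling, so a path arriving from the left would land on a shifted vertex of $\tilde\blambda$.

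The gap is the maximality step. It is the actual content of the theorem, and you leave it as bookkeeping. Transporting a whole competitor back to $\m$ with the path-image rules does not give an element of $\cS$. As the Remark warns, the image need not be simple, and if you extract a simple path plus a simple loop from it, nothing guarantees that this loop is noncontractible.

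Neither of your remedies closes this. A competitor branching at a vertex of $\bp$ does not ``live in a region where nothing has changed'': after branching it may go around the surface and hit $\tilde\blambda$, in particular from the left, where the shift acts. Also, the fact that right-side corners are unshifted only says the local cyclic order is unchanged. Whether a half-edge is admissible, i.e.\ extends to some element of $\tilde\cS$, is a global property. So ``the first admissible half-edge at each vertex of the loop is again the next edge of $\tilde\blambda$'' restates what must be proved; it does not follow from the local observation.

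The missing idea is to transport only a harmless prefix and then close it up inside $\m$. Take the competitor $\tilde\bwp$ (the paper takes the rightmost element of $\tilde\cS$). Cut it at the first index $j$ where it meets $\tilde\blambda$ after leaving $\tilde\bp\bullet\tilde\blambda$; if there is no such index, $\tilde\bwp$ transfers verbatim and you are done. Before time $j$ the prefix only meets the loop from the right. It therefore maps to a simple path $\bwp_{1\to j}$ in $\m$, ending at $\lambda_i^+$ if the contact at $\tilde\lambda_i^+$ is from the right, and at $\lambda_{i\pm1}^+$ if it is from the left.

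The paper then appends $(\lambda_{i+1},\dots,\lambda_k)\bullet\bar{\bp}$ for a right contact, or $\overline{(\lambda_1,\dots,\lambda_{i\pm1})}\bullet\bar{\bp}$ for a left contact. It truncates at the first repeated vertex, so simplicity is automatic, and the result departs to the right of $\bp\bullet\blambda_\m$. After a left contact, the loop crosses $\blambda_\m$ exactly once and is therefore noncontractible.

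That crossing argument does not cover the right-contact case, where the closing loop stays on one side of $\blambda_\m$. There you can argue as follows. If the departure is on $\bp$, append all of $\blambda_\m$ rerooted at $\lambda_i^+$. If the departure is at a vertex of $\blambda_\m$, close the excursion (the part of $\bwp_{1\to j}$ after the departure vertex) with whichever of the two arcs of $\blambda_\m$ between its endpoints gives a noncontractible loop. One of them does, since otherwise $\blambda_\m$ would be contractible.
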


\begin{proof}
Let $\m\in\cM^g$ and $(\tilde\m,\tilde\blambda,\tilde\eps)=\Phi(\m,\blambda_\m,\eps)$. Since~$\Phi$ is an involution preserving number of edges and types (by Proposition~\ref{propinvol}), it suffices to show that the rightmost loop of~$\m$ is mapped to the rightmost loop of~$\tilde\m$, that is, $\tilde\blambda=\blambda_{\tilde\m}$.

Recalling the construction from the \hyperlink{rnslconstr}{previous paragraph}, let~$\cS$ be the set defined there for the map~$\m$ and~$\tilde\cS$ be the equivalent set defined for~$\tilde\m$. In~$\m$, let also~$\bp$ be the path appearing there, so that $\bp\bullet\blambda_\m$ is the rightmost path of~$\cS$. Since~$\bp$ only intersects~$\blambda_\m$ at its end and reaches it from the right, it is mapped through~$\Phi$ in~$\tilde\m$ into a simple path~$\tilde\bp$ starting from the root vertex, ending at the origin of~$\tilde\blambda$, without intersecting with it otherwise. As a result, $\tilde\bp\bullet\tilde\blambda\in\tilde\cS$.

In order to conclude, it remains to see that it is the rightmost path of~$\tilde\cS$. Arguing by contradiction, let us assume otherwise and let $\tilde\bwp\ne\tilde\bp\bullet\tilde\blambda$ be the rightmost element of~$\tilde\cS$. Then~$\tilde\bwp$ starts with a (possibly null) number of half-edges from~$\tilde\bp\bullet\tilde\blambda$, then leaves it -- from its right since it has to be more to the right than~$\tilde\bp\bullet\tilde\blambda$. If~$\tilde\bwp$ does not intersect with~$\tilde\blambda$ afterward, then this path is completely preserved, when applying the reverse mapping $\Phi(\tilde\m,\tilde\blambda,\tilde\eps)$, into a path in~$\cS$, more to the right than~$\bp\bullet\blambda_\m$, contradicting the definition of~$\bp\bullet\blambda_\m$.

So~$\tilde\bwp$ has to intersect with~$\tilde\blambda$. Let us write $\blambda_\m=(\lambda_1,\dots,\lambda_k)$, $\tilde\blambda=(\tilde\lambda_1,\dots,\tilde\lambda_k)$, and $\tilde\bwp=(\tilde\wp_1,\dots,\tilde\wp_\ell)$. Let~$j$ be the index such that~$\tilde\wp_j^+$ belongs to~$\tilde\blambda$ for the first time after~$\tilde\bwp$ leaves~$\tilde\bp\bullet\tilde\blambda$. Let also~$i$ be the index such that $\tilde\wp_j^+=\tilde\lambda_i^+$. We consider the path $\tilde\bwp_{1\to j}\de (\tilde\wp_1,\dots,\tilde\wp_j)$. This path may possibly use half-edges of~$\tilde\blambda$ if~$\tilde\bwp$ leaves~$\tilde\bp\bullet\tilde\blambda$ after~$\tilde\bp$ reaches~$\tilde\blambda$ but does not cross it, as on the top line of Figure~\ref{cross}. As a result, it is preserved through the reverse mapping and mapped into a path~$\bwp_{1\to j}$. See Figure~\ref{lambdatilde}.

\begin{figure}[ht!]
		\psfrag{p}[B][B][.8]{$\bp$}
		\psfrag{g}[B][B][.8]{\textcolor{beta}{$\blambda_\m$}}
		\psfrag{q}[B][B][.8]{$\tilde\bp$}
		\psfrag{h}[B][B][.8]{\textcolor{beta}{$\tilde\blambda$}}
		\psfrag{m}[B][B][.8]{$\m$}
		\psfrag{n}[B][B][.8]{$\tilde\m$}
		\psfrag{t}[Br][Br][.8]{\textcolor{alpha}{$\tilde\bwp_{1\to j}$}}
	\centering\includegraphics[width=.95\linewidth]{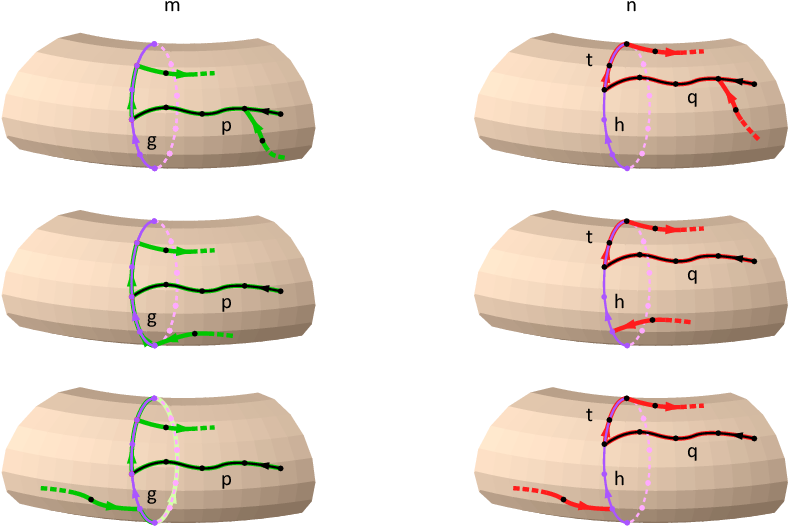}
	\caption{Preservation of the rightmost loop by the involution~$\Phi$. We have, in~$\m$ on the left, the paths~$\bp$ (in black) and~$\blambda_\m$ (in purple), which are mapped through~$\Phi$, in~$\tilde\m$ on the right, respectively into~$\tilde\bp$ (in black) and~$\tilde\blambda$ (in purple). Focusing on the right part of the figure, the rightmost path~$\tilde\bwp$ of~$\tilde\cS$ in~$\tilde\m$ is in red, the part linking the dashes not being represented. It first follows~$\tilde\bp\bullet\tilde\blambda$ on a (possibly null) number of half-edges, then leaves it from its right. After leaving it, it might either not intersect~$\tilde\blambda$ (top line), or reach~$\tilde\blambda$ from its right (middle line), or reach~$\tilde\blambda$ from its left (bottom line). In each case, tracing it back in~$\m$ at the left of the figure provides a path, in green, belonging to~$\cS$ and more to the right than~$\bp\bullet\blambda_\m$.}
	\label{lambdatilde}
\end{figure}

\begin{itemize}
	\item If~$\tilde\bwp_{1\to j}$ reaches~$\tilde\blambda$ from the right, then the path $\bwp_{1\to j}\bullet(\lambda_{i+1},\dots,\lambda_k)\bullet\bar{\bp}$ (with the convention that $(\lambda_{i+1},\dots,\lambda_k)=\varnothing$ if $i=k$), truncated as soon as it becomes nonsimple, belongs to~$\cS$ and is more to the right than~$\bp\bullet\blambda_\m$.
	\item If~$\tilde\bwp_{1\to j}$ reaches~$\tilde\blambda$ from its left, then the path $\bwp_{1\to j}\bullet\overline{(\lambda_{1},\dots,\lambda_{i\pm 1})}\bullet\bar{\bp}$, where the sign~$\pm$ is given by~$\eps$ (with the convention that the middle path is~$\varnothing$ if $i=1$ and $\eps=-$), truncated as soon as it becomes nonsimple, belongs to~$\cS$ and is more to the right than~$\bp\bullet\blambda_\m$.
\end{itemize}
On the example of Figure~\ref{lambdatilde}, the path $\tilde\bwp_{1\to j}$ contains~$\tilde\bp$, so~$\bar{\bp}$ is in the truncated part. Observe that the path in consideration in noncontractible since it crosses exactly once the noncontractible loop~$\blambda_\m$. In both cases, we obtain a contradiction. The result follows.
\end{proof}

\section{Classification of unicellular toroidal maps}

From now on, we restrict our attention to unicellular toroidal maps. In order to derive~\eqref{equnicel}, we will need to rotate along other loops. The very rough idea is that we can rotate along two loops, changing the parity of the one along which we \textbf{do not} rotate. Up to a side factor (the planar term), there are thus 4 ways to go from bipartite to not necessarily bipartite. We will see that it is actually a bit more complicated.

The simpler structure of unicellular toroidal maps will allow us to properly define loops in such a way that rotations along them will be tractable. However, in contrast with the first loop~$\blambda_\m$, the other loops might not be preserved through the involution. As a result, we have to classify more thoroughly the maps.

\subsection{Loops of a unicellular toroidal map}

The structure of a unicellular toroidal map is well known and quite simple. It consists of a \emph{scheme} -- a unicellular toroidal map without vertices of degree~$1$ or~$2$ -- with multiple trees grafted to the left and right of its edges. More precisely, the scheme is obtained by iteratively deleting all the vertices of degree~$1$, and then replacing each maximal chain of degree~$2$ vertices linking vertices of degree at least~$3$ with a single edge. It naturally inherit its root from that of the map; we do not go into details as this will not be used here.

In fact, there are only~$2$ possible schemes in genus~$1$, as we can see in Figure~\ref{loops}: one called \emph{generic} with~$2$ vertices and~$3$ edges, and one called \emph{degenerate} with~$1$ vertex and~$2$ edges. 

\begin{figure}[ht!]
		\psfrag{a}[B][B][.8]{\textcolor{alpha}{$\balpha_\bu$}}
		\psfrag{b}[B][B][.8]{\textcolor{beta}{$\bbeta_\bu$}}
		\psfrag{l}[B][B][.8]{\textcolor{beta}{$\blambda_\bu$}}
		\psfrag{m}[B][B][.8]{\textcolor{mu}{$\bmu_\bu$}}
	\centering\includegraphics[width=.95\linewidth]{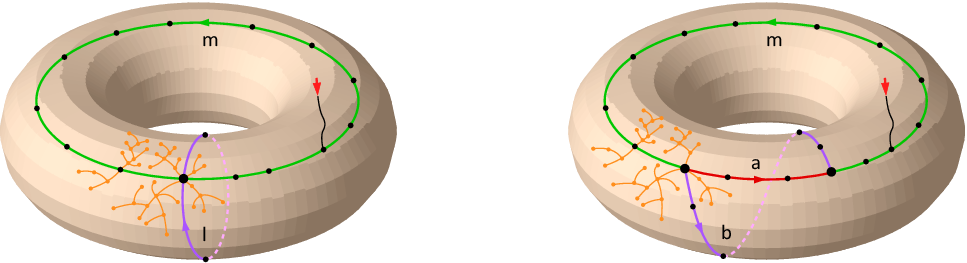}
	\caption{Loops structure in a unicellular toroidal map. Only a few trees are represented (in orange), as well as the path from the root to~$\bmu_\bu$. \textbf{Left.} Case of a degenerate scheme. There are two loops of interest here: $\blambda_\bu$ and~$\bmu_\bu$. \textbf{Right.} Case of a generic scheme. Here, the rightmost loop~$\blambda_\bu$ breaks down as the concatenation of~$\bar\bbeta_\bu$ with~$\balpha_\bu$. We are interested in two extra loops not homotopic to~$\blambda_\bu$, namely $\bmu_\bu\bullet\balpha_\bu$ and $\bmu_\bu\bullet\bbeta_\bu$.}
	\label{loops}
\end{figure}

\paragraph*{Loops.}
The loops are structured as follows. Let $\bu\in\cU^{1}$ be a unicellular toroidal map, and $\blambda_\bu=(\lambda_1,\dots,\lambda_k)$ be its rightmost noncontractible simple loop, defined in Section~\ref{secrl}. There is a unique edge of the scheme that is not visited by~$\blambda_\bu$: it corresponds in~$\bu$ to a path~$\bmu_\bu$ linking~$\lambda_1^-$ to~$\lambda_i^-$ for some $1\le i\le k$.

\begin{itemize}
	\item If $i=1$, then the scheme is degenerate. In this case, the path~$\bmu_\bu$ is actually a noncontractible simple loop, not homotopic to~$\blambda$. We also set in this case $\balpha_\bu\de\varnothing$ to the empty path.

	\item If $i\ge 2$, then the scheme is generic. In this case, we may define two different noncontractible simple loops that are not homotopic to~$\blambda$, by concatenating~$\bmu_\bu$ either with $\balpha_\bu\de (\lambda_i,\dots,\lambda_k)$, or with the reverse~$\bbeta_\bu$ of $(\lambda_1,\dots,\lambda_{i-1})$. 
\end{itemize}

\paragraph*{Partition according to loop parities.}
We partition $\cU^{1}=\cU^{1}_{(0,0)} \sqcup \,\cU^{1}_{(0,1)} \sqcup \,\cU^{1}_{(1,0)} \sqcup \,\cU^{1}_{(1,1)}$ according to the parity of the lengths of the loops $(\blambda_\bu,\bmu_\bu\bullet\balpha_\bu)$, by setting
\[
\cU_{(i,j)}^{1} \de \bigl\{\bu\in\cU^1 \;\big\vert\; \lvert \blambda_\bu \rvert \equiv i \bmod 2 \ \text{ and }\ \lvert\bmu_\bu\bullet\balpha_\bu\rvert\equiv j \bmod 2 \,\bigr\},\qquad 0\le i,j \le 1.
\]

Since, in the generic case, $\lvert \bmu_\bu\bullet\bbeta_\bu \rvert = 2 \lvert \bmu_\bu \rvert + \lvert \blambda_\bu \rvert - \lvert \bmu_\bu\bullet\balpha_\bu \rvert$, it holds that~$\bu$ is bipartite if and only if both loops~$\blambda_\bu$ and~$\bmu_\bu\bullet\balpha_\bu$ have even length, that is, $\cU^{1}_{(0,0)}=\bcU^{1}$. From the observation of the \hyperlink{parity}{last paragraph} of Section~\ref{secinvol}, we also see that Theorem~\ref{thmzero} specializes into bijections
\[
\bcU^{1}=\cU^{1}_{(0,0)}\xrightleftharpoons[\Psi^{\tilde\eps}]{\Psi^{\eps}} \cU^{1}_{(0,1)}
\qquad \eps\in\{+,-\}.\]
As a result, $\bigl\lvert\,\cU^{1}_{(0,1)}\bigr\rvert=\bigl\lvert\,\cU^{1}_{(0,0)}\bigr\rvert=\bigl\lvert\,\bcU^{1}\bigr\rvert$, so that the desired formula~\eqref{equnicel} boils down to
\begin{equation}\label{equnicsple}
\bigl\lvert\,\cU^{1}_{(1,0)}\bigr\rvert + \bigl\lvert\,\cU^{1}_{(1,1)}\bigr\rvert
	= 2\,\bigl\lvert\,\bcU^{1}\bigr\rvert + \bigl\lvert\,\bcU^{0}_{2}\bigr\rvert.
\end{equation}

In order to obtain this equality, we will further partition the four sets involved into several subsets to which we will apply different rotations.

\paragraph*{Extra parameters for refined partitions.}
Rotating along~$\bmu_\bu\bullet\balpha_\bu$ is not as easy as rotating along~$\blambda_\bu$ since it might break the definition of the loops in the resulting map. One thus needs to proceed carefully.

One first parameter to take into account is whether the scheme is generic or degenerate. Another parameter of interest is the location of the root. More precisely, let $\bmu_\bu=(\mu_1,\dots,\mu_q)$. By definition of~$\blambda_\bu$, we see that the root of~$\bu$ is linked by a (possibly empty) path to a vertex~$\mu_j^-$ from the left, for some $1\le j \le q$. We record with the variables 
\[
b_\bu\de \begin{cases}
	1 &\text{ if } j=1\\
	0 &\text{ otherwise}
\end{cases} \qquad\text{ and }\qquad 
e_\bu\de \begin{cases}
	1 &\text{ if } j=q\\
	0 &\text{ otherwise}
\end{cases}
\]
whether this path arrives at the \oldemph{beginning} of~$\bmu_\bu$, and whether it arrives at the \oldemph{end} of~$\bmu_\bu$.

\subsection{Rotating along well-chosen loops}

Using the previously defined parameter, we obtain the following specializations, depicted in Figure~\ref{figspec}. As shorthands, we write ``$\bu \text{ deg.}$'' (resp.~''$\bu \text{ gen.}$'') to mean that the scheme of~$\bu$ is degenerate (resp.\ generic). We add an extra bijection, which is presented below and depicted in Figure~\ref{bijplan}.

\begin{theorem}\label{thmspec}
The following sets are in bijection:
\begin{align}
\Big\{\bu\in\cU^{1}_{(1,0)}\,\colon\, \bu \text{ gen.}\Big\}\hphantom{,\ b_\bu=1}
		& \xrightleftharpoons[\bmu\bullet\balpha,-]{\hspace{1.34em}\rlap{$\scriptstyle\bmu\bullet\balpha,+$}\hspace{3.42em}} 
				\textcolor{blue!50!black}{\Big\{\bu\in\bcU^{1}\,\colon\, e_\bu=0\Big\}}
						\tag{\textit{a}}\label{speca}\\
\Big\{\bu\in\cU^{1}_{(1,0)}\,\colon\, \bu \text{ deg.},\ b_\bu=1 \Big\}
		& \xrightleftharpoons[\bmu\bullet\balpha,-]{\hspace{1.8em}\rlap{$\scriptstyle\bmu,+$}\hspace{2.96em}} 
				\textcolor{blue!50!black}{\Big\{\bu\in\bcU^{1}\,\colon\, e_\bu = 1,\ \lvert \bmu_{\bu}\rvert=1 \Big\}}
						\tag{\textit{b}}\label{specb}\\
\Big\{\bu\in\cU^{1}_{(1,0)}\,\colon\, \bu \text{ deg.},\ b_\bu=0 \Big\}
		& \xrightleftharpoons[\bmu\bullet\bbeta,-]{\hspace{1.8em}\rlap{$\scriptstyle\bmu,+$}\hspace{2.96em}} 
				\textcolor{red!50!black}{\Big\{\bu\in\bcU^{1}\,\colon\, \bu \text{ gen.},\ \lvert\bbeta_\bu\rvert =1 \Big\}}
						\tag{\textit{c}}\label{specc}\\[3mm]
\Big\{\bu\in\cU^{1}_{(1,1)}\,\colon\, \bu \text{ gen.},\ b_\bu=0 \Big\}
		& \xrightleftharpoons[\bmu\bullet\bbeta,-]{\hspace{1.34em}\rlap{$\scriptstyle\bmu\bullet\bbeta,+$}\hspace{3.42em}} 
				\textcolor{red!50!black}{\Big\{\bu\in\bcU^{1}\,\colon\, \bu \text{ gen.},\ \lvert\bbeta_{\bu}\rvert \ge 2 \Big\}}
						\tag{\textit{d}}\label{specd}\\
\Big\{\bu\in\cU^{1}_{(1,1)}\,\colon\, \bu \text{ gen.},\ b_\bu=1 \Big\}
		& \xrightleftharpoons[\bmu\bullet\balpha,-]{\hspace{1.34em}\rlap{$\scriptstyle\bmu\bullet\bbeta,+$}\hspace{3.42em}}
				\textcolor{blue!50!black}{\Big\{\bu\in\bcU^{1}\,\colon\, e_\bu = 1,\ \lvert\bmu_{\bu}\rvert \ge 2 \Big\}}
						\tag{\textit{e}}\label{spece}\\
\Big\{\bu\in\cU^{1}_{(1,1)}\,\colon\, \bu \text{ deg.},\ e_\bu=0 \Big\}
		& \xrightleftharpoons[\bmu\bullet\balpha,+]{\hspace{.28em}\rlap{$\scriptstyle\bmu,-$}\hspace{1.42em}}
					\xrightleftharpoons[\blambda,+]{\hspace{.28em}\rlap{$\scriptstyle\blambda,-$}\hspace{1.42em}}
				\textcolor{red!50!black}{\Big\{\bu\in\bcU^{1}\,\colon\, \bu \text{ deg.}\}}
						\tag{\textit{f}}\label{specf}\\
\Big\{\bu\in\cU^{1}_{(1,1)}\,\colon\, \bu \text{ deg.},\ e_\bu=1 \Big\} 
	& \xrightleftharpoons[]{\hspace{4.76em}} 
			\,\bcU^{0}_{2}.
						\tag{\textit{g}}\label{specg}
\end{align}
\end{theorem}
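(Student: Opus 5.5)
Each of the bijections (a)–(f) is a rotation $\Phi$ along a canonically defined loop, and (g) is a separate construction. The template for (a)–(f) is the one used in Theorem~\ref{thmzero}. Given $\bu$ in the left-hand set, I apply $\Phi$ to $(\bu,\bgamma_\bu,\eps)$, where $\bgamma_\bu$ is the loop named on the upper arrow. This gives $(\tilde\bu,\tilde\bgamma,\tilde\eps)$. By Proposition~\ref{propinvol}, $\tilde\bu$ is again unicellular toroidal with $n$ edges. The map is a bijection with inverse given by the lower arrow once two facts hold. First, $\tilde\bu$ lies in the right-hand set. Second, $\tilde\bgamma$ equals the loop named on the lower arrow computed in $\tilde\bu$, and $\tilde\eps$ matches the lower sign. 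The converse direction (the image of the right-hand set under the lower arrow lands in the left-hand set, and the loop is recovered) is symmetric, since $\Phi$ is an involution.

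For each case I would carry out three checks in order.

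\textbf{Rightmost loop.} I would track how $\blambda$ transforms. The argument of Theorem~\ref{thmzero} needs care here, because we now rotate along a loop $\bgamma$ not homotopic to $\blambda$. The path $\bp\bullet\blambda$ therefore crosses $\bgamma$ (or shares edges with it), and its image is described by the rules of Section~\ref{secinvol}. Using the explicit scheme structure of Figure~\ref{loops}, I would identify $\blambda_{\tilde\bu}$ as one of the images $\tilde\blambda$, $\tilde\bmu\bullet\tilde\balpha$ or $\tilde\bmu\bullet\tilde\bbeta$. This step governs which loop appears on the lower arrow: in (b) it is $\bmu$ forward and $\bmu\bullet\balpha$ backward, in (e) the loops on the two arrows differ, and so on. The parameters $b_\bu$ and $e_\bu$ determine on which side, and where along $\bmu$, the root path arrives. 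They also determine whether rotating shifts the attachment point to an endpoint of $\bmu$, which can change the scheme from degenerate to generic (cases (b), (c)) or alter $|\bmu|$ or $|\bbeta|$ by one. This explains the constraints $|\bmu_\bu|=1$, $|\bbeta_\bu|=1$, $|\bbeta_\bu|\ge2$ and $|\bmu_\bu|\ge2$.

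\textbf{Parity.} I would check parities with the crossing rule: each crossing of the sliding loop flips the parity of a loop's length, while homotopic loops keep their parity. Rotating along a loop not homotopic to $\blambda$ flips $|\blambda|$. For $\bmu\bullet\balpha$ one uses the identity $|\bmu\bullet\bbeta|=2|\bmu|+|\blambda|-|\bmu\bullet\balpha|$. Together these show that $\cU^1_{(1,0)}$ and $\cU^1_{(1,1)}$ are sent to $\cU^1_{(0,0)}=\bcU^1$.

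\textbf{Image sets and disjointness.} I would verify that the right-hand sets partition $\bcU^1$ twice: the blue sets (a), (b), (e) are split by $e_\bu$ and $|\bmu_\bu|$, and the red sets (c), (d), (f) by the scheme type and $|\bbeta_\bu|$. This is what gives \eqref{equnicsple}. Case (f) is a composition: a $\bmu$-rotation lands in an intermediate class, still degenerate and with odd $|\blambda|$, and a $\Psi$-type rotation along $\blambda$ follows. The $\blambda$ step is handled by Theorem~\ref{thmzero}, so only the first step needs the analysis above.

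For (g), take a degenerate $\bu\in\cU^1_{(1,1)}$ with $e_\bu=1$, so both $\blambda$ and $\bmu$ have odd length and the root path attaches at the end of $\bmu$. I would slit along $\blambda$ and along $\bmu$, or equivalently cut the single scheme vertex. The result is a plane tree. Its two copies of the scheme vertex become the two marked vertices, and the odd lengths of the loops should force these to be at odd distance from the root. I would then reglue so that the result is a tree with $n$ edges rather than $n+|\blambda|+|\bmu|$; the natural way is to delete one copy of each loop and record the cut points as the marks.

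I expect the main obstacle to be the first check: proving that after a rotation along $\bmu\bullet\balpha$ or $\bmu\bullet\bbeta$ the new rightmost loop and the new $\bmu$ are exactly the images predicted. This requires a contradiction argument in the style of Theorem~\ref{thmzero}, with case analysis on where the competing rightmost path meets the sliding loop. The boundary cases $b_\bu=1$ and $e_\bu=1$ need separate treatment, and they are what force the exact splitting of the partition. The precise construction in (g) is the second risk.
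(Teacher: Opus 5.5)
For (\textit{a})--(\textit{e}) your plan matches the paper. The paper also just checks, case by case on the scheme type and on $b_\bu,e_\bu$, that the rotation lands in the claimed set and that the image of the sliding loop is the loop used on the return arrow, with the scheme edges $\bmu,\balpha,\bbeta$ permuted. This is a finite local analysis, because the rightmost loop is read off from where the root path meets $\bmu$. A contradiction argument in the style of Theorem~\ref{thmzero} is not really needed.

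The genuine gap is (\textit{g}), where the construction you sketch fails at several points.
\begin{itemize}
\item Slitting along $\blambda_\bu$ and $\bmu_\bu$ does not give a plane tree, and it is not the same as cutting the scheme vertex. It gives a disk with boundary cycle $\bmu^\ell\bullet\blambda^\ell\bullet\bar\bmu^r\bullet\bar\blambda^r$ of length $2([\blambda_\bu]+[\bmu_\bu])$, with four copies of the scheme vertex at its corners, not two.
\item Deleting one copy of each loop would disconnect the trees grafted on that side. The missing idea is to sew this boundary back onto itself in a planar way: $\bmu^\ell\bullet\blambda^\ell$ onto $\blambda^r\bullet\bmu^r$. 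This gives a plane tree with $n$ edges containing a path $\bp$ of even length $[\blambda_\bu]+[\bmu_\bu]$ between the marks $a\neq b$.
\item Your parity claim is false in general. The condition $e_\bu=1$ means the root path $\bwp$ reaches $\mu_q^-$, one edge before the end of $\bmu_\bu$, not the end itself. Then $a$ and $b$ lie at distances $\equiv[\bwp]$ from the root. So when $[\bwp]$ is even the sewing must be shifted by one edge: sew $(\bar\lambda^r_1)\bullet\bmu^\ell\bullet(\lambda^\ell_1,\dots,\lambda^\ell_{k-1})$ onto $(\lambda^r_2,\dots,\lambda^r_k)\bullet\bmu^r\bullet(\bar\lambda^\ell_k)$.
\item You give no inverse and no surjectivity argument. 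From a tree in $\bcU^{0}_{2}$ one orients the chain so that $\bwp$ arrives on its left and takes the vertex $u$ right after the attachment vertex $v$. One then splits $\bp^\ell$ at $u^\ell$, with the same parity-dependent shift, into $\bmu^\ell\bullet\blambda^\ell$, and sews $\bmu^\ell$ to $\bmu^r$ and $\blambda^\ell$ to $\blambda^r$. The condition $e_\bu=1$ is exactly what makes this cut point recoverable, and hence what makes the map a bijection.
\end{itemize}

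Your description of (\textit{f}) is also internally inconsistent. Suppose the intermediate map had an odd rightmost loop. The second rotation along $\blambda$ keeps that loop as rightmost, with the same length (Theorem~\ref{thmzero}), so the output could not be bipartite.

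What actually happens is different. The condition $e_\bu=0$ forces $[\bmu_\bu]\ge 3$. The two ends of the $\blambda$-chain meet the scheme vertex from opposite sides of $\bmu_\bu$, so rotating along $\bmu_\bu$ separates them by one notch. The intermediate map is therefore generic, with a new scheme edge of length $1$. For the output to be bipartite, its rightmost loop must be the even cycle formed by the $\blambda$-chain and this edge, and the other two cycles are odd. It is the $\blambda$-rotation that merges the two scheme vertices back into a degenerate bipartite scheme.

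Finally, $\Psi^{\pm}$ is a bijection of all of $\cU^{1}$, so Theorem~\ref{thmzero} does not by itself identify the image. You still have to check that $\Psi^{-}$ maps the intermediate set exactly onto the degenerate part of $\bcU^{1}$, that is, carry out the case analysis for both steps, as the paper indicates.
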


Except for the last line~\eqref{specg}, the mappings and their inverses are given through the shorthand piece of notation ``$\bgamma,\eps$'', which means the mapping that maps~$\m$ into the first coordinate of $\Phi(\m,\bgamma_\m,\eps)$. 
Equation~\eqref{equnicsple} then follows from the fact that
\begin{itemize}
	\item $\cU^{1}_{(1,0)}$ is partitioned into the 3 first sets on the left,
	\item $\cU^{1}_{(1,1)}$ is partitioned into the 4 last sets on the left,
	\item $\bcU^{1}$ is partitioned into the first, second and fifth sets on the right (in blue), as well as into the third, fourth and sixth sets on the right (in red).
\end{itemize}

The general heuristic behind this theorem is that we want to rotate along a loop different from~$\blambda$. Let us not focus on the degenerate cases~\eqref{specf} and~\eqref{specg} for the moment. When~$\blambda$ has odd length, exactly one of~$\bmu\bullet\balpha$ or~$\bmu\bullet\bbeta$ has even length; so rotating along it is the only possibility to create a bipartite map. In the first 3 cases, $\bu\in\cU^{1}_{(1,0)}$, so we rotate along~$\bmu\bullet\balpha$. In the cases~\eqref{specd} and~\eqref{spece}, $\bu\in\cU^{1}_{(1,1)}$, so we rotate along~$\bmu\bullet\bbeta$. Conversely, when $\bu\in\bcU^{1}$, rotating either along~$\bmu\bullet\balpha$ (cases in blue) or along~$\bmu\bullet\bbeta$ (cases in red) is possible.

\begin{figure}[ht!]
		\psfrag{a}[B][B]{$\xrightleftharpoons[\textcolor{mu}{\bmu}\bullet\textcolor{alpha}{\balpha},-]
						{\textcolor{mu}{\bmu}\bullet\textcolor{alpha}{\balpha},+}$}
		\psfrag{b}[B][B]{$\xrightleftharpoons[\textcolor{mu}{\bmu}\bullet\textcolor{alpha}{\balpha},-]
						{\textcolor{mu}{\bmu},+}$}
		\psfrag{c}[B][B]{$\xrightleftharpoons[\textcolor{mu}{\bmu}\bullet\textcolor{beta}{\bbeta},-]
						{\textcolor{mu}{\bmu},+}$}
		\psfrag{d}[B][B]{$\xrightleftharpoons[\textcolor{mu}{\bmu}\bullet\textcolor{beta}{\bbeta},-]
						{\textcolor{mu}{\bmu}\bullet\textcolor{beta}{\bbeta},+}$}
		\psfrag{e}[B][B]{$\xrightleftharpoons[\textcolor{mu}{\bmu}\bullet\textcolor{alpha}{\balpha},-]
						{\textcolor{mu}{\bmu}\bullet\textcolor{beta}{\bbeta},+}$}
		\psfrag{f}[B][B]{$\xrightleftharpoons[\textcolor{mu}{\bmu}\bullet\textcolor{alpha}{\balpha},+]{\textcolor{mu}{\bmu},-}$}
		\psfrag{g}[B][B]{$\xrightleftharpoons[\textcolor{blue}{\blambda},+]{\textcolor{blue}{\blambda},-}$}
	\centering\includegraphics[width=.95\linewidth]{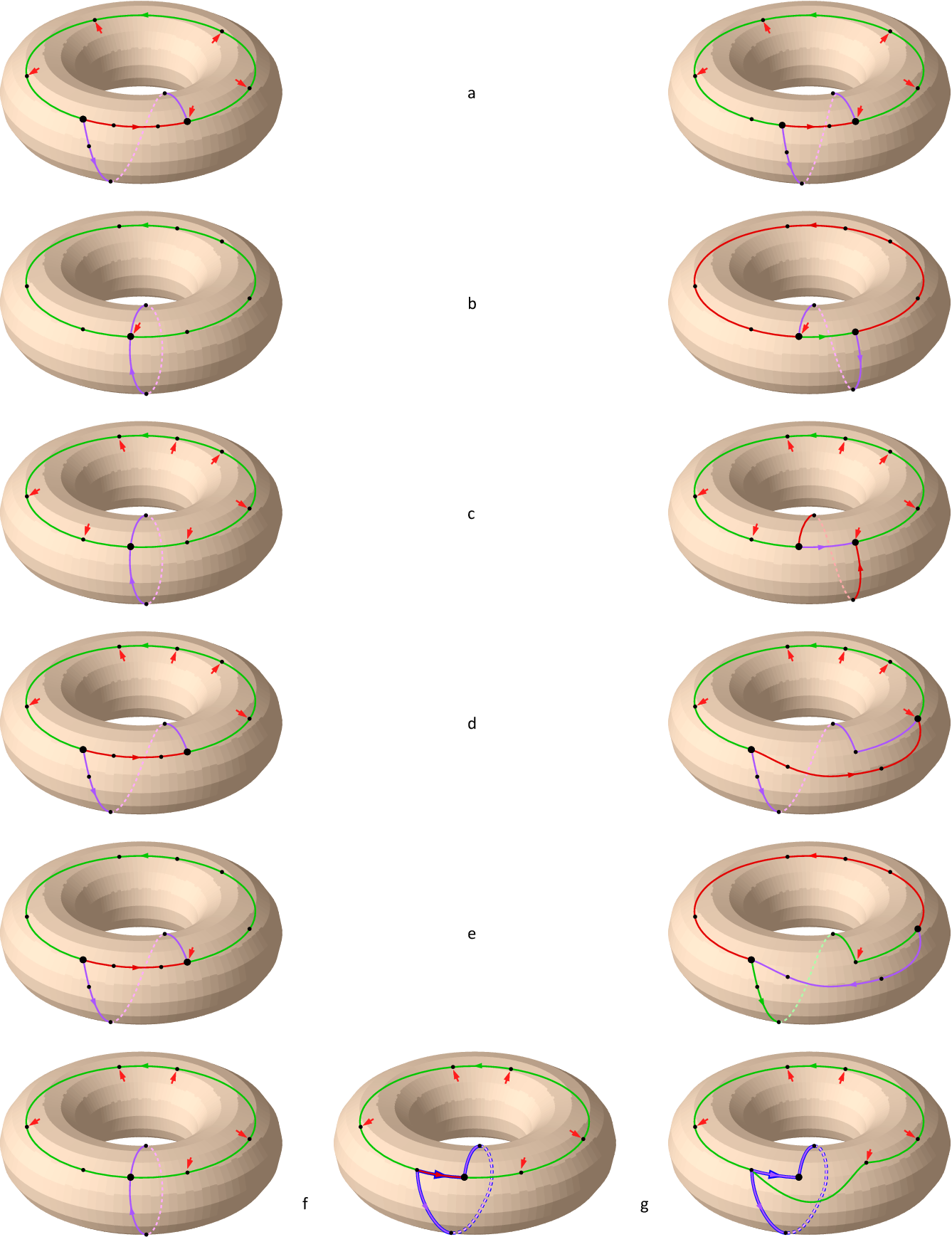}
	\caption{The different specializations of Theorem~\ref{thmspec}. We use the same color code as in Figure~\ref{loops} for all the paths so that we can easily track how they are transformed through the mappings. Only the schemes and the possible location of the path linking to the root are represented.}
	\label{figspec}
\end{figure}

\begin{proof}[Proof of Theorem~\ref{thmspec}, Bijections~\eqref{speca} to~\eqref{specf}]
For bijections~\eqref{speca} to~\eqref{spece}, since we always apply an involution with a~$+$ in one direction and an involution with a~$-$ in the other direction, one only needs to check that the rotations bring us to the claimed sets and that the distinguished loops are the ones used in the other direction.

This is a simple case analysis, which is depicted in Figure~\ref{figspec}. The loop system $\{\bmu,\balpha,\bbeta\}$ is always globally preserved, but permutations within may happen, depending on how the root is connected to the scheme.

\medskip
For~\eqref{specf}, it is similar; we just need to do this twice to take care of the composition.
\end{proof}

\paragraph*{Missing bijection.}\hypertarget{mbpar}
Let us now describe the bijection~\eqref{specg} between $\big\{\bu\in\cU^{1}_{(1,1)}\,\colon\, \bu \text{ deg.},\ e_\bu=1 \big\}$ and the set~$\bcU^{0}_{2}$. Recall that~$\bcU^{0}_{2}$ is the set of plane trees with~$2$ distinct marked vertices at odd distance from the root vertex. These trees thus have a simple underlying structure onto which trees are grafted. This structure consists in a positive even length chain~$\bc$ of edges linking the two marked vertices, and a path~$\bwp$ linking the root vertex to one vertex~$v$ of this chain, in two possible manners:
\begin{enumerate}[label=(\textit{\roman*})]
	\item\label{mb1} either $[\bwp]$ is odd and~$v$ is at even distance to each marked vertex (one of these possibly null);
	\item\label{mb2} or $[\bwp]$ is even (possibly null) and~$v$ is at odd distance to each marked vertex.
\end{enumerate}
We refer to Figure~\ref{bijplan} for visual support.

\paragraph*{From toroidal unicellular maps to trees.}
Let $\bu\in\cU^{1}_{(1,1)}$ have a degenerate scheme and be such that $e_\bu=1$. We slit along the two odd length loops~$\blambda_\bu$ and~$\bmu_\bu$. We thus obtain a topological disk with boundary $\bmu^\ell\bullet\blambda^\ell\bullet\bar\bmu^r\bullet\bar\blambda^r$, where~$\blambda^\ell$ and~$\blambda^r$ (resp.~$\bmu^\ell$ and~$\bmu^r$) are respectively the left and right copies of~$\blambda_\bu$ (resp.\ of~$\bmu_\bu$). 

The root is linked to the boundary by a path~$\bwp$, arriving to the left of~$\bmu^\ell$, at the origin of its last half-edge, since $e_\bu=1$. Let us write $\blambda^\ell=(\lambda_1^\ell,\dots,\lambda_k^\ell)$ and $\blambda^r=(\lambda_1^r,\dots,\lambda_k^r)$. We sew back the boundary in a planar manner as follows.
\begin{itemize}
	\item If $[\bwp]$ is odd, we sew $\bmu^\ell\bullet\blambda^\ell$ onto~$\blambda^r\bullet\bmu^r$.
	\item If $[\bwp]$ is even, we sew $(\bar\lambda_1^r)\bullet\bmu^\ell\bullet(\lambda_1^\ell,\dots,\lambda_{k-1}^\ell)$ onto~$(\lambda_2^r,\dots,\lambda_k^r)\bullet\bmu^r\bullet (\bar\lambda_k^\ell)$.
\end{itemize}
In both cases, we mark the common origin of the sewn paths as~$a$, and their common end as~$b$.

We obtain a plane tree consisting in a simple underlying structure onto which trees are grafted. The structure is made of a positive even length path~$\bp$ linking~$a$ to~$b$, and a path~$\bwp$, arriving to the left of~$\bp$ (possibly at~$a$), linking the root vertex to some vertex~$v$, lying either at positive even distance from~$b$, and (possibly null) even distance from~$a$ if $[\bwp]$ is odd, or at odd distance from~$a$ and~$b$ if~$[\bwp]$ is even. Keeping the resulting plane tree with its marked vertices, we thus obtain an element from~$\bcU^{0}_{2}$.

\begin{figure}[ht!]
		\psfrag{l}[B][B][.8]{\textcolor{beta}{$\blambda_\bu$}}
		\psfrag{c}[B][B][.8]{\textcolor{beta}{$\blambda^\ell$}}
		\psfrag{d}[B][B][.8]{\textcolor{beta}{$\blambda^r$}}
		\psfrag{m}[B][B][.8]{\textcolor{mu}{$\bmu_\bu$}}
		\psfrag{a}[B][B][.8]{\textcolor{mu}{$\bmu^\ell$}}
		\psfrag{b}[B][B][.8]{\textcolor{mu}{$\bmu^r$}}
		\psfrag{x}[B][B][.8]{\textcolor{blue}{$v$}}
		\psfrag{y}[B][B][.8]{\textcolor{blue}{$u$}}
		\psfrag{u}[B][B][.8]{\textcolor{blue}{$a$}}
		\psfrag{v}[B][B][.8]{\textcolor{blue}{$b$}}
		\psfrag{p}[B][B][.8]{\textcolor{blue}{$\bp$}}
		\psfrag{w}[B][B][.8]{$\bwp$}
		\psfrag{h}[B][B][.8]{\textit{slit}}
		\psfrag{i}[B][B][.8][37.35]{\textit{unfold}}
		\psfrag{j}[B][B][.8]{\textit{sew}}
	\centering\includegraphics[width=.95\linewidth]{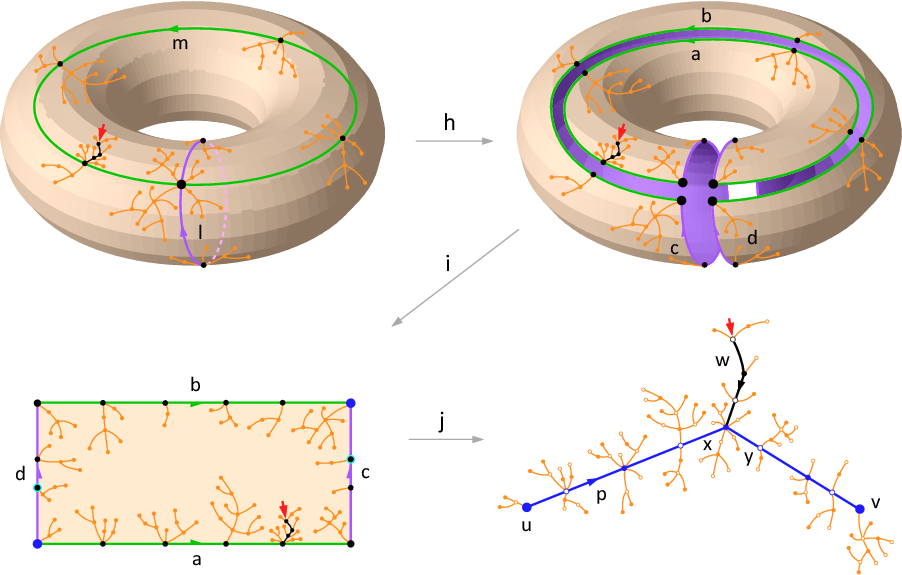}
	\caption{Last bijection of Theorem~\ref{thmspec}. Starting from a unicellular map $\bu\in\cU^{1}_{(1,1)}$ with degenerate scheme and $e_\bu=1$, we slit along the scheme, unfold the resulting rectangle, sew back and mark two vertices. In this example, $[\bwp]=3$ is odd, so we sew the boundary between the vertices in blue. If~$[\bwp]$ were even, we would have sewn between the vertices with a light blue outline instead.}
	\label{bijplan}
\end{figure}

\paragraph*{From trees to toroidal unicellular maps.}
The reverse mapping starts form an element in~$\bcU^{0}_{2}$ and proceeds as follows. Let~$\bc$, $\bwp$ and~$v$ be defined as in the \hyperlink{mbpar}{Missing bijection} paragraph. First, we orient the chain~$\bc$ linking the marked vertices into a path~$\bp$ such that~$\bwp$ arrives to its left. In the eventuality that~$\bwp$ reaches~$\bc$ at one of its extremities, $\bp$ is chosen in such a way that this vertex is its origin (note that, in this case, $[\bwp]$ is odd so that we are in case~\ref{mb1}). We let~$u$ be the vertex right after~$v$ along~$\bp$.

We slit along~$\bp$ and obtain a topological disk whose boundary consists of the left copy~$\bp^\ell$ and right copy~$\bp^r$ of~$\bp$. On each copy, there is also a copy of~$u$: we let~$u^\ell$ be the copy on~$\bp^\ell$.
\begin{itemize}
	\item If $[\bwp]$ is odd, we split~$\bp^\ell$ at~$u^\ell$ into $\bmu^\ell\bullet\blambda^\ell$. We also split~$\bp^r$ into $\blambda^r\bullet\bmu^r$ in such a way that $[\blambda^r]=[\blambda^\ell]$, and thus that $[\bmu^r]=[\bmu^\ell]$.
	\item If $[\bwp]$ is even, we let~$\bmu^\ell$ be the part of~$\bp^\ell$ starting at its second half-edge and ending at~$u^\ell$, and we let~$\blambda^\ell$ be the part of~$\bp^\ell$ between~$u^\ell$ and its end, concatenated with the first subsequent half-edge on the boundary. We then define~$\blambda^r$ and~$\bmu^r$ in such a way that the boundary is given by the cycle~$\bmu^\ell\bullet\blambda^\ell\bullet\bar\bmu^r\bullet\bar\blambda^r$ and $[\blambda^r]=[\blambda^\ell]$.
\end{itemize}
We sew~$\bmu^\ell$ onto~$\bmu^r$, as well as~$\blambda^\ell$ onto~$\blambda^r$.

\begin{proof}[Proof of Theorem~\ref{thmspec}, Bijection~\eqref{specg}]
Let us denote the above mappings, depicted in Figure~\ref{bijplan}, by
\begin{align}
\Big\{\bu\in\cU^{1}_{(1,1)}\,\colon\, \bu \text{ deg.},\ e_\bu=1 \Big\} 
	& \xrightleftharpoons[\hspace{1em}\Theta\hspace{1em}]{\Xi} \,\bcU^{0}_{2}.\tag{\ref{specg}}
\end{align}

Let $\bu$ be in the left-hand side set. Since $[\bp]=[\blambda_\bu]+[\bmu_\bu]$ and since the loops~$\blambda_\bu$ and~$\bmu_\bu$ have odd length, we see that~$\bp$ has even length, larger than~$2$. In particular, $a$ and~$b$ are distinct. Furthermore, the choice of~$a$, $b$ depending on the parity of the length of~$\bwp$ guarantees that~$a$ and~$b$ lie at odd distance from the root vertex. As a result, $\Xi(\bu)\in\bcU^{0}_{2}$. 

Conversely, let $\bt\in\bcU^{0}_{2}$ and let us look at $\Theta(\bt)$. We denote the origin and end of~$\bp$ by~$a$ and~$b$. Since~$a$ and~$b$ lie at odd distance from the root vertex, they lie at even distance from each other. The sewings of~$\bmu^\ell$ onto~$\bmu^r$ and of~$\blambda^\ell$ onto~$\blambda^r$ yields a toroidal map $\bu=\Theta(\bt)$ with degenerate scheme given by the two loops $\bmu^\ell\sew\bmu^r$ and~$\blambda^\ell\sew\blambda^r$, having both odd length. Since~$\bwp$ arrives on~$\bmu^\ell$ before its end, we see that~$\blambda_\bu=\blambda^\ell\sew\blambda^r$ and that $\bmu_\bu=\bmu^\ell\sew\bmu^r$. Finally, since~$\bwp$ arrives on~$\bmu^\ell$ exactly one edge before its end, we have $e_\bu=1$, so that $\Theta(\bt)$ belongs to the set we claim.

From the observations above, it should be clear that~$\Xi$ and~$\Theta$ are inverse one from the other.
\end{proof}

\section{Covered maps}\label{seccov}

Let us finally explain how to derive~\eqref{eqcm}. We may apply the involution~$\Phi$ on covered maps when the marked loop is included in the covering map as follows. Let $(\m,\bu)$ be a covered map of some genus $g\ge 1$, let~$\bgamma$ be a noncontractible simple loop \textbf{in the covering map}~$\bu$, and let $\eps\in\{+,-\}$. We can apply~$\Phi$ to~$\m$ and~$\bgamma$, setting $(\tilde\m,\tilde\bgamma,\tilde\eps)\de\Phi(\m,\bgamma,\eps)$. 

Now, an edge of~$\bu$ is 
\begin{itemize}
	\item either on~$\bgamma$ and is then transported by~$\Phi$ to an edge of~$\tilde\bgamma$ in~$\tilde\m$;
	\item or not on~$\bgamma$ and preserved by~$\Phi$ in~$\tilde\m$.
\end{itemize}
From this, we observe that the image~$\tilde\bu$ of~$\bu$ gives a covered map $(\tilde\m,\tilde\bu)$ and that $(\tilde\bu,\tilde\bgamma,\tilde\eps)=\Phi(\bu,\bgamma,\eps)$. As a result, we obtain the following corollary to Proposition~\ref{propinvol}.

\begin{corollary}\label{corcov}
The mapping~$\Phi$ extends to an involution on the set of triples $((\m,\bu),\bgamma,\eps)$ consisting of a covered map $(\m,\bu)$, a distinguished noncontractible simple loop~$\bgamma$ \textbf{of the covering map}~$\bu$, and a sign $\eps\in\{+,-\}$.

If $((\tilde\m,\tilde\bu),\tilde\bgamma,\tilde\eps)\de\Phi((\m,\bu),\bgamma,\eps)$, then $(\tilde\m,\tilde\bgamma,\tilde\eps)=\Phi(\m,\bgamma,\eps)$ and $(\tilde\bu,\tilde\bgamma,\tilde\eps)=\Phi(\bu,\bgamma,\eps)$.
 
Furthermore, it preserves the genus, the number of edges of both the ambient map and of the covering map, as well as the type when the faces are labeled.
\end{corollary}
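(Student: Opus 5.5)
The plan is to check the claim directly from the slit--slide--sew construction and then inherit the involution property from Proposition~\ref{propinvol}. The central observation is that $\Phi$ only modifies the map along $\bgamma$. Slitting along $\bgamma$ and sewing with an offset acts on the surface, so it acts on every cellular graph embedded in that surface which contains $\bgamma$.

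First I will show that the image is again a covered map. Since $\bgamma\subseteq\bu\subseteq\m$, slitting $\m$ along $\bgamma$ also slits $\bu$ along the same curve. Sliding by one notch and sewing then turns the slit $\bu$ into a graph $\tilde\bu$ whose edges are of two kinds: edges of $\bu$ not on $\bgamma$, which are untouched, and the edges $\gamma^r_i\sew\gamma^\ell_{i\pm1}$ of $\tilde\bgamma$. These are exactly the edges listed before the corollary, so $E(\tilde\bu)\subseteq E(\tilde\m)$. Vertices are handled the same way: the copies $(\gamma_i^\ell)^-$ and $(\gamma_{i\pm1}^r)^-$ are glued in both maps at once, so $V(\tilde\bu)=V(\tilde\m)$. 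Because the cut and the gluing are literally the same operation on the surface, the embedded $\tilde\bu$ coincides with the first coordinate of $\Phi(\bu,\bgamma,\eps)$. By Proposition~\ref{propinvol} applied to $\bu$, that first coordinate is a map of genus $g$ with the same number of edges and faces as $\bu$. In particular it is unicellular and cellularly embedded, and it has the same genus as $\tilde\m$.

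Next I will check that the roots agree. The root corner of $\m$ sits inside the root corner of $\bu$. The operation only reattaches corners along $\bgamma$, and it does so simultaneously in both maps, so a corner of $\tilde\m$ still lies inside the corresponding corner of $\tilde\bu$. This gives $(\tilde\m,\tilde\bu)$ as a covered map with root preserved, and the equalities $(\tilde\m,\tilde\bgamma,\tilde\eps)=\Phi(\m,\bgamma,\eps)$ and $(\tilde\bu,\tilde\bgamma,\tilde\eps)=\Phi(\bu,\bgamma,\eps)$.

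The involution property is then immediate. Here $\tilde\bgamma$ is a noncontractible simple loop of $\tilde\bu$, so $\Phi$ can be applied again. Applying Proposition~\ref{propinvol} to both coordinates returns $(\m,\bgamma,\eps)$ and $(\bu,\bgamma,\eps)$, hence the original triple. Preservation of genus, of the edge counts of $\m$ and $\bu$, and of the type also follows from Proposition~\ref{propinvol} applied to each coordinate.

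The main obstacle is the compatibility of the two gluings: one has to make sure that the rotation of $\m$ along $\bgamma$ induces exactly the rotation of $\bu$, including at corners. I would settle this by viewing $\Phi$ as a single homeomorphism between slit surfaces followed by a quotient, so that every embedded subgraph containing $\bgamma$ is carried along consistently.
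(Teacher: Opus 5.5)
Your proposal is correct and follows the paper's route: the edges of $\bu$ split into those on $\bgamma$, which are carried to $\tilde\bgamma$, and those off $\bgamma$, which are untouched, so $\tilde\bu$ is the first coordinate of $\Phi(\bu,\bgamma,\eps)$, and everything else is inherited from Proposition~\ref{propinvol} applied to both coordinates. You only spell out the vertex-set and root-corner compatibility that the paper leaves implicit; note one harmless index slip, since with the paper's convention it is $(\gamma^r_i)^-$ that is glued to $(\gamma^\ell_{i\pm1})^-$, as in your own edge description.
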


Recall that Equation~\eqref{equnicel} was obtained by partitioning $\cU^{1}=\cU^{1}_{(0,0)} \sqcup \,\cU^{1}_{(0,1)} \sqcup \,\cU^{1}_{(1,0)} \sqcup \,\cU^{1}_{(1,1)}$ and relating these four subsets to~$\bcU^{1}$ and, subsidiarily, to~$\bcU^{0}_{2}$. Let us then see how this partition translates, when applied to the covering map.

First of all, we observe that, in any genus $g\ge 0$, the bipartiteness of the ambient map is given by that of the covering map.

\begin{proposition}\label{propbipcov}
The ambient map of a covered map of any genus, with even degree faces, is bipartite if and only if its covering map is bipartite.
\end{proposition}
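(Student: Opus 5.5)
The forward direction is immediate. The covering map $\bu$ is a subgraph of $\m$ with the same vertex set, so every cycle of $\bu$ is a cycle of $\m$. Hence if $\m$ has no odd cycle, neither does $\bu$. Equivalently, a proper bicoloring of $V(\m)=V(\bu)$ for $\m$ is also proper for $\bu$.

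For the converse, assume $\bu$ is bipartite and fix its proper bicoloring of $V(\bu)=V(\m)$. We must show that every edge $e\in E(\m)\setminus E(\bu)$ joins vertices of different colors. Equivalently, writing $\bwp_e$ for any path in $\bu$ linking the endpoints of $e$, the closed walk $e\bullet\bwp_e$ should have even length. The plan is to use the cellular structure: since $\bu$ is unicellular with the same genus as $\m$, the edges of $\m$ not in $\bu$ all lie inside the unique face of $\bu$, which is an open disk. Cutting the surface along $\bu$ gives a polygon $P$ whose boundary is the contour of the face of $\bu$, of length $2|E(\bu)|$. The extra edges are chords of $P$. These chords subdivide $P$ into exactly the faces of $\m$, since each face of $\m$ lies within the face of $\bu$.

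The key step is a parity argument in this polygon. Fix a chord $e$ of $P$. It splits the disk $P$ into two sub-disks, and each is a union of faces of $\m$. Consider one side $D$, a union of faces $f_{i_1},\dots,f_{i_s}$. Summing their degrees counts every chord strictly inside $D$ twice, and counts once each boundary half-edge of $P$ on that side together with $e$ itself. Since all face degrees are even, the boundary arc of $P$ on that side, plus $e$, has even length. That boundary arc is a walk in $\bu$ along the face contour linking the two endpoints of $e$. Therefore $e$ closed up by a walk in $\bu$ gives an even closed walk. Since $\bu$ is bipartite, that walk in $\bu$ between the endpoints of $e$ has odd length, so the endpoints of $e$ get different colors. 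Doing this for every extra edge shows the bicoloring of $\bu$ is proper for $\m$, so $\m$ is bipartite.

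The main obstacle is making the cutting step rigorous: that the faces of $\m$ exactly partition the face of $\bu$ into polygons bounded by chords, and that a single chord separates the disk $P$. Both follow from the fact that the face of $\bu$ is homeomorphic to an open disk and that $\m$ is cellular with edges contained in it. One subtlety is that a vertex may appear several times on the contour of $P$. This is harmless: the boundary arc is still a genuine walk in $\bu$, and bipartiteness gives parity information for walks, not just simple paths. For loops of $\m$ outside $\bu$ (with equal endpoints), the same argument produces an odd-length closed walk in $\bu$, which is impossible. So such loops do not occur, consistent with the conclusion.
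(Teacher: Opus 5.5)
Your proof is correct and takes essentially the paper's route: slit $\m$ along $\bu$ to obtain a plane map on a disk bounded by the length-$2n$ contour, then use the evenness of the face degrees. The only difference is that you run the handshake parity count directly on the sub-disk cut off by each chord, which is exactly the paper's footnote argument. The paper instead quotes that plane even-face maps are bipartite, and uses uniqueness of the $2$-coloring to match it with the boundary coloring inherited from $\bu$. One small correction to your wording: the odd length of the boundary arc comes from the parity count, not from bipartiteness of $\bu$; bipartiteness is only what makes the endpoints of $e$ receive different colors.
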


\begin{proof}
Let us call \emph{$2$-coloring} of a map a proper black and white coloring of its vertices, with the root vertex colored white. A map is then bipartite if and only if it admits a $2$-coloring, and, in this case, the $2$-coloring is unique.

\medskip
Let $(\m,\bu)$ be a covered map with even degree faces. If there exists a $2$-coloring of~$\m$, then this coloring immediately yields a $2$-coloring of~$\bu$, since its root vertex is the same as~$\m$ and its edges are all edges of~$\m$.

\medskip
Conversely, let us assume that~$\bu$ admits a $2$-coloring, and let~$n$ be the number of edges of~$\bu$. Slitting the map~$\m$ along~$\bu$ yields a map on a topological disk whose length~$2n$ boundary inherits the $2$-coloring from~$\bu$. This map is plane and its faces have the same degrees as~$\m$, except the extra (external) face, which has degree~$2n$. Since it is a plane map with even degree faces, it admits\footnote{Recall that a plane with even degree faces is bipartite. Indeed, if it has an odd length cycle, cutting along it yields a map with exactly one odd degree face, which contradicts the fact that the sum of the face degrees is twice the number of edges.} a $2$-coloring. And since this $2$-coloring is unique, it has to be compatible with that of the boundary inherited from~$\bu$. Recovering~$\m$ by sewing back the boundary of the disk as it was initially, we obtain a $2$-coloring of~$\m$. Note that the coloring is compatible with the sewing since the glued vertices originally had the same color.
\end{proof}

Let $\ba=(a_1,\ldots,a_r)$ be a tuple of positive even integers, and $n\de \frac12 \sum_{j=1}^r a_j -r+1$ be the number\footnote{The ambient map has $\frac12 \sum_{j=1}^r a_j$ edges and~$r$ faces, thus $\frac12 \sum_{j=1}^r a_j-r+2-2g$ vertices, by the Euler characteristic formula. The covering map has as many vertices and~$1$ face, thus~$n$ edges by the Euler characteristic formula.} of edges of the covering map of any element in~$\bccM^{g}_{2i}(\ba)$ for any $g\ge 0$, $i\ge 0$. We thus have that $\ccM^{1}(\ba)$ is the set of covered maps of type~$\ba$ whose covering map lies in~$\cU^1$. From Proposition~\ref{propbipcov}, we furthermore obtain that $\bccM^{1}(\ba)$ is the subset of~$\ccM^{1}(\ba)$ whose covering map belongs to~$\bcU^{1}$ and that~$\bccM_{2}^{0}(\ba)$ is the set of plane covered maps of type~$\ba$ whose covering tree lies in~$\bcU^{0}_{2}$.

We partition $\ccM^{1}(\ba)$ according to the partition of~$\cU^1$ into nine subsets used above: $\cU^{1}_{(1,0)}$, $\cU^{1}_{(1,1)}$, and the seven left-hand sides of Theorem~\ref{thmspec}. Corollary~\ref{corcov} ensures that, except for the last one, \eqref{specg}, all these subsets are in bijection with $\bccM^{1}(\ba)$ or a subset thereof, as prescribed by the right-hand sides of Theorem~\ref{thmspec}.

In order to conclude the proof of Proposition~\ref{propcm}, it remains to see that the last subset,
\[\big\{(\m,\bu)\in \ccM^{1}(\ba) \,\colon\, \bu\in\cU^{1}_{(1,1)},\, \bu \text{ deg.},\ e_\bu=1 \big\},\]
is in bijection with $\bccM_{2}^{0}(\ba)$. To obtain this, it suffices to see that the bijections~$\Xi$ and~$\Theta$ interpreting~\eqref{specg} extend to bijections between the corresponding sets of covered maps. 

These bijections consists in slitting along either the degenerate scheme in genus~$1$, or along a Y-shape in genus~$0$ in order to obtain a topological disk, and then sew it back along the other structure. Similarly as above, we can apply these mappings to the covering map. The extra vertices and edges of the ambient map do not interfere with the constructions and the faces are naturally preserved in the processes. The type is thus preserved and the mappings easily extend to bijections.

\bibliographystyle{alpha}
\bibliography{main}

\end{document}